\newcommand{\email}[1]{\href{mailto:#1}{\nolinkurl{#1}}}
\renewcommand{\leq}{\ensuremath{\leqslant}}
\renewcommand{\geq}{\ensuremath{\geqslant}}
\newcommand{\minimize}[2]{\ensuremath{\underset{\substack{{#1}}}%
{\text{\rm minimize}}\;\;#2 }}
\newcommand{\argmind}[2]{\ensuremath{\underset{\substack{{#1}}}%
{\text{\rm argmin}}\;\;#2 }}
\newcommand{\pair}[2]{\langle{{#1},{#2}}\rangle} 
\newcommand{\Pair}[2]{\big\langle{{#1},{#2}}\big\rangle}
\newcommand{\menge}[2]{\big\{{#1}~\big |~{#2}\big\}}
\newcommand{\IDD}{\ensuremath{\text{\rm int\:dom}f}}
\newcommand{\IDDG}{\ensuremath{\text{\rm int\:dom}g}}
\newcommand{\IDDPS}{\ensuremath{\text{\rm int\:dom}\psi}}
\newcommand{\IDDS}{\ensuremath{\text{\rm int\:dom}f^*}}
\newcommand{\RX}{\ensuremath{\left]-\infty,+\infty\right]}}
\newcommand{\XX}{\ensuremath{{\mathcal X}}}
\newcommand{\YY}{\ensuremath{{\mathcal Y}}}
\newcommand{\emp}{\ensuremath{{\varnothing}}}
\newcommand{\prox}{\ensuremath{\text{\rm prox}}}
\newcommand{\Prox}{\ensuremath{\text{\rm Prox}}}
\newcommand{\Id}{\ensuremath{\operatorname{Id}}\,}
\newcommand{\RR}{\ensuremath{\mathbb{R}}}
\newcommand{\RP}{\ensuremath{\left[0,+\infty\right[}}
\newcommand{\RPP}{\ensuremath{\left]0,+\infty\right[}}
\newcommand{\NN}{\ensuremath{\mathbb N}}
\newcommand{\weakly}{\ensuremath{\:\rightharpoonup\:}}
\newcommand{\exi}{\ensuremath{\exists\,}}
\newcommand{\pinf}{\ensuremath{{+\infty}}}
\newcommand{\dom}{\ensuremath{\text{\rm dom}\,}}
\newcommand{\Cart}{\ensuremath{\raisebox{-0.5mm}{\mbox{\huge{$\times$}}}}}
\newcommand{\inte}{\ensuremath{\text{\rm int}\,}}
\newcommand{\ran}{\ensuremath{\text{\rm ran}\,}}
\newcommand{\gra}{\ensuremath{\text{\rm gra}\,}}
\newcommand{\Argmin}{\ensuremath{\text{\rm Argmin}\,}}
\newcommand{\BP}{\ensuremath{\EuScript P}}
\newcommand{\BF}{\ensuremath{\EuScript F}}
\newcommand{\BFS}{\ensuremath{\EuScript S}}
\newtheorem{theorem}{Theorem}[section]
\newtheorem{lemma}[theorem]{Lemma}
\newtheorem{corollary}[theorem]{Corollary}
\newtheorem{proposition}[theorem]{Proposition}
\theoremstyle{plain}{\theorembodyfont{\rmfamily}%
}
\theoremstyle{plain}{\theorembodyfont{\rmfamily}%
\newtheorem{example}[theorem]{Example}}
\theoremstyle{plain}{\theorembodyfont{\rmfamily}%
\newtheorem{remark}[theorem]{Remark}}
\theoremstyle{plain}{\theorembodyfont{\rmfamily}%
}
\theoremstyle{plain}{\theorembodyfont{\rmfamily}%
\newtheorem{condition}[theorem]{Condition}}
\theoremstyle{plain}{\theorembodyfont{\rmfamily}%
\newtheorem{definition}[theorem]{Definition}}
\theoremstyle{plain}{\theorembodyfont{\rmfamily}
\newtheorem{problem}[theorem]{Problem}}
\theoremstyle{plain}{\theorembodyfont{\rmfamily}
}
\numberwithin{equation}{section}
\definecolor{labelkey}{rgb}{0,0.08,0.45}
\definecolor{refkey}{rgb}{0,0.6,0.0}
\definecolor{Brown}{rgb}{0.45,0.0,0.05}
\definecolor{dgreen}{rgb}{0.00,0.49,0.00}
\definecolor{dblue}{rgb}{0,0.08,0.75}
\begin{document}

\title{\sffamily\LARGE Forward-Backward Splitting 
with Bregman Distances}

\author{Quang Van Nguyen\\[5mm]
\small
\small Department of Mathematics, Hanoi National University of Education, \\
\small 126 Xuan Thuy Street, Cau Giay dist., Hanoi, Vietnam \\
\small and \\
\small Laboratory for Information Theory and Inference Systems (LIONS), \\
\small \'Ecole Polytechnique F\'ed\'erale de Lausanne, Switzerland.\\
\small \email{quangnv@hnue.edu.vn, quang.nguyen@epfl.ch}
}
\date{~}
\maketitle
\setcounter{page}{1}

\vskip 8mm

\begin{abstract}
We propose a forward-backward splitting algorithm 
based on Bregman distances for composite minimization 
problems in general reflexive Banach spaces.
The convergence is established using the notion of 
variable quasi-Bregman monotone sequences. Various examples are
discussed, including some in Euclidean spaces, where new algorithms
are obtained. 
\end{abstract}

{\bfseries Key words.}
Banach space,
Bregman distance, 
forward-backward algorithm,
Legendre function, 
multivariate minimization, 
variable quasi-Bregman monotonicity.

{\bfseries Mathematics Subject Classifications (2010)} 
90C25

\newpage

\section{Introduction}
\noindent
We consider the following composite convex 
minimization problem.

\begin{problem}
\label{IIIpb:1}
Let $\XX$ and $\YY$ be reflexive real Banach spaces, 
let $\varphi\in\Gamma_0(\XX)$, 
let $\psi\in\Gamma_0(\YY)$ be G\^ateaux differentiable on 
$\IDDPS\neq\emp$, and let $L\colon\XX\to\YY$ be 
a bounded linear operator. The problem is to
\begin{equation}
\label{IIIeq:1}
\minimize{x\in \XX}{\varphi(x)+\psi(Lx)}.
\end{equation}
The set of solutions to \eqref{IIIeq:1} is denoted by $\BFS$.
\end{problem}

A particular instance of \eqref{IIIeq:1} is 
when $\psi=D^g(\cdot,r)$, where 
$g\in\Gamma_0(\YY)$ is G\^ateaux differentiable on 
$\IDDG\ni r$, i.e.,
\begin{equation}
\label{IIIe:56}
\minimize{x\in \XX}{\varphi(x)+D^g(Lx,r)}.
\end{equation}
This model provides a framework for many problems arising in applied 
mathematics. For instance, when $\XX$ and $\YY$ are Euclidean 
spaces and $g$ is Boltzmann-Shannon entropy, 
it captures many problems in 
information theory and signal recovery \cite{IIIBBDV09}. 
Besides, the nearness matrix problem \cite{IIIDHTR07} and the 
log-determinant minimization problem \cite{IIICPW12} 
can be also regarded as special cases of \eqref{IIIe:56}.

An objective is constructing effective 
splitting methods, i.e, the methods that activate 
each function in the model separately, 
to solve Problem~\ref{IIIpb:1} 
(see \cite{IIICW05} for more discussions). 
It was shown in \cite{IIICW05} that if $\XX$ and $\YY$ are 
Hilbert spaces and if $\psi$ possess a $\beta^{-1}$-Lipschitz 
continuous gradient for some $\beta\in\RPP$, 
then Problem~\ref{IIIpb:1} can be solved by 
the standard forward-backward algorithm
\begin{equation}
\label{IIeq:1b}
(\forall n\in\NN)\quad x_{n+1}=\prox_{\gamma_n\varphi}
\big(x_n-\gamma_nL^*(\nabla\psi(Lx_n))\big),
\quad\text{where}\quad 0<\gamma_n<2\beta.
\end{equation}
Here, $(\prox_{\gamma_n\varphi})_{n\in\NN}$ 
are the Moreau proximity operators \cite{IIIMor62b}. 
However, many problems in applications do not conform to these 
hypotheses, for example when $\XX$ and $\YY$ are Euclidean spaces 
and $\psi$ is Boltzmann-Shannon entropy. This 
type of functions appears in many problems in image and signal 
processing, in statistics, and in machine learning 
\cite{IIIBBM07,IIIByr93,IIICPW12,IIIKW01,IIIHMC02,IIIMC01}. 
Another difficulty in the 
implementation of \eqref{IIeq:1b} is that the operators 
$(\prox_{\gamma_n\varphi})_{n\in\NN}$ are not always easy 
to evaluate. The objective of the present paper is to propose 
a version of the forward-backward splitting algorithm to solve 
Problem~\ref{IIIpb:1}, which is so far limited to Hilbert spaces, 
in the general framework of reflexive real Banach spaces. 
This algorithm, which employs Bregman distance-based proximity operators, 
provides new algorithms in the framework 
of Euclidean spaces, which are, in some instances, more favorable 
than the standard forward-backward splitting algorithm. 
This framework can be applied in the case when $\psi$ 
is not everywhere differentiable and in some instances, 
it requires less efforts in the computation of proximity operators 
than the classical framework. This paper revolves around 
the following definitions.

\begin{definition}{\rm\cite{IIIBBC01,IIIBBC03}}
Let $\XX$ be a reflexive real Banach space, 
let $\XX^*$ be the topological dual space of $\XX$, 
let $\pair{\cdot}{\cdot}$ be the duality pairing between 
$\XX$ and $\XX^*$, let $f\colon\XX\to\RX$ be a 
lower semicontinuous convex function that is 
G\^ateaux differentiable on $\IDD\neq\emp$, 
let $f^*\colon\XX^*\to\RX\colon
x^*\mapsto\sup_{x\in\XX}(\pair{x}{x^*}-f(x))$ be 
conjugate of $f$, and let
\begin{equation}
\label{IIIe:subdiff}
\partial f\colon\XX\to 2^{\XX^*}\colon x\mapsto
\menge{x^*\in\XX^*}{(\forall y\in\XX)\,
\pair{y-x}{x^*}+f(x)\leq f(y)},  
\end{equation}
be the Moreau subdifferential of $f$. 
The \emph{Bregman distance} associated with $f$ is 
\begin{equation}
\label{IIIe:Bdist}
\begin{aligned}
D^f\colon\XX\times\XX&\to\,\left[0,\pinf\right]\\
(x,y)&\mapsto 
\begin{cases}
f(x)-f(y)-\Pair{x-y}{\nabla f(y)},&\text{if}\;\;y\in\IDD;\\
\pinf,&\text{otherwise}.
\end{cases}
\end{aligned}
\end{equation}
In addition, $f$ is a \emph{Legendre function} if it is 
\emph{essentially smooth} in the sense that $\partial f$ is 
both locally bounded and single-valued on its
domain, and \emph{essentially strictly convex} in the sense
that $\partial f^*$ is locally bounded on its domain and 
$f$ is strictly convex on every convex subset of $\dom\partial f$.
Let $C$ be a closed convex subset of $\XX$ such that
$C\cap\IDD\neq\emp$. The \emph{Bregman projector} onto 
$C$ induced by $f$ is 
\begin{equation}
\label{IIIe:2001}
\begin{aligned}
P^f_C\colon\IDD&\to C\cap\IDD\\
y&\mapsto\underset{x\in C}{\text{argmin}}\,D^f(x,y),
\end{aligned}
\end{equation}
and the \emph{$D^f$-distance} 
to $C$ is the function
\begin{equation}
\begin{aligned}
D^f_C\colon\XX&\to\left[0,+\infty\right]\\
y&\mapsto\inf D^f(C,y).
\end{aligned}
\end{equation}
\end{definition}

The paper is organized as follows. In Section~\ref{IIIsec:tech}, 
we provide some preliminary results. We present 
the forward-backward splitting algorithm in 
reflexive Banach spaces in Section~\ref{IIIsec:FB}. 
Section~\ref{IIIsect:mul} is devoted to an application 
of our result to multivariate minimization problem 
together with examples.

\noindent {\bf Notation and background.} 
The norm of a Banach space is denoted by $\|\cdot\|$. 
The symbols $\weakly$ and $\to$ represent respectively weak and 
strong convergence. The set of weak sequential cluster points 
of a sequence $(x_n)_{n\in\NN}$ is denoted by 
$\mathfrak{W}(x_n)_{n\in\NN}$. 
Let $M\colon\XX\to 2^{\XX^*}$. 
The domain of $M$ is $\dom M=\menge{x\in\XX}{Mx\neq\emp}$ 
and the range of $M$ is 
$\ran M=\menge{x^*\in\XX^*}{(\exi x\in\XX)\,x^*\in Mx}$. 
Let $f\colon\XX\to\RX$. Then $f$ is cofinite if 
$\dom f^*=\XX^*$, is coercive if 
$\lim_{\|x\|\to+\infty}f(x)=+\infty$, is supercoercive if 
$\lim_{\|x\|\to+\infty}f(x)/\|x\|=+\infty$, and is uniformly 
convex at $x\in\dom f$ if there exists an increasing 
function $\phi\colon\left[0,+\infty\right[
\to\left[0,+\infty\right]$ that vanishes only at $0$ such that
\begin{equation}
(\forall y\in\dom f)
(\forall \alpha\in\left]0,1\right[)\quad
f(\alpha x+(1-\alpha)y)+\alpha(1-\alpha)\phi(\|x-y\|)
\leq\alpha f(x)+(1-\alpha)f(y).
\end{equation}
Denote by $\Gamma_0(\XX)$ the class of all 
lower semicontinuous convex functions $f\colon\XX\to\RX$ 
such that $\dom f=\menge{x\in\XX}{f(x)<+\infty}\neq\emp$. 
Let $f\in\Gamma_0(\XX)$. The set of global minimizers of 
$f$ is denoted by $\Argmin f$. 
Finally, $\ell_{+}^1(\NN)$ is the set of all 
summable sequences in $\left[0,+\infty\right[$.

\section{Preminarily results}
\label{IIIsec:tech}
\noindent
First, we recall the following definitions and results.
\begin{definition}{\rm\cite{IIIQ}}
Let $\XX$ be a reflexive real Banach space 
and let $f\in\Gamma_0(\XX)$ be G\^ateaux differentiable on 
$\IDD\neq\emp$. Then
\begin{equation}
\BF(f)=\menge{g\in\Gamma_0(\XX)}{g\;\text{is G\^ateaux 
differentiable on}\;\IDDG=\IDD}.
\end{equation}
Moreover, if $g_1$ and $g_2$ are in $\BF(f)$, then
\begin{equation}
g_1\succcurlyeq g_2\quad\Leftrightarrow\quad
(\forall x\in\dom f)(\forall y\in\IDD)
\quad D^{g_1}(x,y)\geq D^{g_2}(x,y).
\end{equation}
For every $\alpha\in\RP$, set
\begin{equation}
\BP_{\alpha}(f)=\menge{g\in\BF(f)}{g\succcurlyeq\alpha f}.
\end{equation}
\end{definition}

\begin{definition}{\rm\cite{IIIQ}}
\label{IIIdef:1}
Let $\XX$ be a reflexive real Banach space, 
let $f\in\Gamma_0(\XX)$ be G\^ateaux differentiable on 
$\IDD\neq\emp$, let $(f_n)_{n\in\NN}$ 
be in $\BF(f)$, let $(x_n)_{n\in\NN}\in(\IDD)^{\NN}$, 
and let $C\subset\XX$ be such that 
$C\cap\dom f\neq\emp$. Then $(x_n)_{n\in\mathbb{N}}$ is:  
\begin{enumerate} 
\item
\emph{quasi-Bregman monotone} with respect to $C$ relative to
$(f_n)_{n\in\mathbb{N}}$ if 
\begin{multline}
\label{IIIe:qbm}
(\exists(\eta_n)_{n\in\mathbb{N}}\in\ell_+^1(\mathbb{N}))
(\forall x\in C\cap\dom f)
(\exists(\varepsilon_n)_{n\in\mathbb{N}}\in
\ell_+^1(\mathbb{N}))(\forall n\in\mathbb{N})\\ 
D^{f_{n+1}}(x,x_{n+1})\leq
(1+\eta_n)D^{f_n}(x,x_n)+\varepsilon_n;
\end{multline}
\item 
\emph{stationarily quasi-Bregman monotone} with respect to $C$
relative to $(f_n)_{n\in\mathbb{N}}$ if
\begin{multline}
\label{IIIe:sqbm}
(\exists(\varepsilon_n)_{n\in\mathbb{N}}\in\ell_+^1(\mathbb{N}))
(\exists(\eta_n)_{n\in\mathbb{N}}\in\ell_+^1(\mathbb{N}))(\forall
x\in C\cap\dom f)(\forall n\in\mathbb{N})\\
D^{f_{n+1}}(x,x_{n+1})\leq (1+\eta_n)D^{f_n}(x,x_n)+\varepsilon_n.
\end{multline}
\end{enumerate}
\end{definition}

\begin{condition}
{\rm\cite[Condition~4.4]{IIIBBC03}}
\label{IIIcd:2}
Let $\XX$ be a reflexive real Banach space
and let $f\in\Gamma_0(\XX)$ be G\^ateaux differentiable on 
$\IDD\neq\emp$. For every bounded sequences 
$(x_n)_{n\in\NN}$ and $(y_n)_{n\in\NN}$ in $\IDD$,
\begin{equation}
\label{IIIe:2}
D^f(x_n,y_n)\to 0\quad\Rightarrow\quad x_n-y_n\to 0.
\end{equation}
\end{condition}

\begin{proposition}{\rm\cite{IIIQ}}
\label{IIIpp:qb1}
Let $\XX$ be a reflexive real Banach space, 
let $f\in\Gamma_0(\XX)$ be G\^ateaux differentiable on 
$\IDD\neq\emp$, let $\alpha\in\RPP$, 
let $(f_n)_{n\in\NN}$ be in $\BP_{\alpha}(f)$, let 
$(x_n)_{n\in\NN}\in(\IDD)^{\NN}$, let 
$C\subset\XX$ be such that $C\cap\IDD\neq\emp$, 
and let $x\in C\cap\IDD$. Suppose that 
$(x_n)_{n\in\mathbb{N}}$ is quasi-Bregman monotone with 
respect to $C$ relative to $(f_n)_{n\in\mathbb{N}}$. 
Then the following hold.
\begin{enumerate}
\item
\label{IIIpp:qb1i}
$(D^{f_n}(x,x_n))_{n\in\NN}$ converges.
\item
\label{IIIpp:qb1ii}
Suppose that $D^f(x,\cdot)$ is coercive. Then 
$(x_n)_{n\in\mathbb{N}}$ is bounded.
\end{enumerate}
\end{proposition}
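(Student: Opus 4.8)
The plan is to prove both claims directly from the defining inequality of quasi-Bregman monotonicity, exploiting the hypothesis that each $f_n\in\BP_\alpha(f)$, which by definition of $\BP_\alpha(f)$ means $f_n\succcurlyeq\alpha f$, i.e.\ $D^{f_n}(x,x_n)\geq\alpha D^f(x,x_n)$ for the relevant $x$ and $x_n$. For part~\eqref{IIIpp:qb1i}, fix the given $x\in C\cap\IDD$ and invoke quasi-Bregman monotonicity to obtain summable sequences $(\eta_n)_{n\in\NN}$ and $(\varepsilon_n)_{n\in\NN}$ with
\begin{equation}
\label{IIIe:plan1}
(\forall n\in\NN)\quad D^{f_{n+1}}(x,x_{n+1})\leq(1+\eta_n)D^{f_n}(x,x_n)+\varepsilon_n.
\end{equation}
Set $t_n=D^{f_n}(x,x_n)\geq 0$. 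Then \eqref{IIIe:plan1} is exactly the recursion $t_{n+1}\leq(1+\eta_n)t_n+\varepsilon_n$ with $(\eta_n)_{n\in\NN}$ and $(\varepsilon_n)_{n\in\NN}$ summable.

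The key step is then a standard convergence lemma for nonnegative sequences satisfying such a perturbed-monotonicity recursion: if $t_{n+1}\leq(1+\eta_n)t_n+\varepsilon_n$ with $\sum_n\eta_n<\pinf$ and $\sum_n\varepsilon_n<\pinf$, then $(t_n)_{n\in\NN}$ converges. I would either cite this directly or reproduce the short argument: the quantity $u_n=t_n\prod_{k<n}(1+\eta_k)^{-1}+\text{(tail correction)}$ is, up to the summable error terms, nonincreasing, and since the infinite product $\prod_k(1+\eta_k)$ converges (because $\sum_k\eta_k<\pinf$), one deduces that $(t_n)_{n\in\NN}$ is bounded and that $\limsup_n t_n\leq\liminf_n t_n$, hence convergent. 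This gives~\eqref{IIIpp:qb1i}.

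For part~\eqref{IIIpp:qb1ii}, I would combine the just-proved convergence with the lower bound coming from $f_n\succcurlyeq\alpha f$. Convergence of $(t_n)_{n\in\NN}$ implies in particular that $\sup_n D^{f_n}(x,x_n)=M<\pinf$. Since $f_n\succcurlyeq\alpha f$ forces $\alpha D^f(x,x_n)\leq D^{f_n}(x,x_n)\leq M$, we get $D^f(x,x_n)\leq M/\alpha$ for all $n$, so the sequence $\big(D^f(x,x_n)\big)_{n\in\NN}$ is bounded. The coercivity hypothesis on $D^f(x,\cdot)$ then precludes $\|x_n\|\to\pinf$ along any subsequence: if $(x_n)_{n\in\NN}$ were unbounded, extracting a subsequence with $\|x_{n_k}\|\to\pinf$ would force $D^f(x,x_{n_k})\to\pinf$, contradicting the uniform bound $M/\alpha$. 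Hence $(x_n)_{n\in\NN}$ is bounded.

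The main obstacle, such as it is, lies entirely in part~\eqref{IIIpp:qb1i}: one must correctly handle the multiplicative perturbation $(1+\eta_n)$ rather than a purely additive one, since a careless telescoping only yields boundedness, not convergence. The cleanest route is to appeal to the classical quasi-Fej\'er / Robbins--Siegmund type lemma for deterministic sequences; once convergence of $(t_n)_{n\in\NN}$ is in hand, part~\eqref{IIIpp:qb1ii} is an immediate consequence of the $\BP_\alpha(f)$ lower estimate and coercivity, requiring no further subtlety.
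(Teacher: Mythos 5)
Your proof is correct. The paper itself does not reproduce a proof of this proposition (it is quoted verbatim from \cite{IIIQ}), but your route --- fixing $x\in C\cap\IDD$, extracting from \eqref{IIIe:qbm} the recursion $t_{n+1}\leq(1+\eta_n)t_n+\varepsilon_n$ with summable $(\eta_n)_{n\in\NN}$ and $(\varepsilon_n)_{n\in\NN}$ and invoking the classical quasi-Fej\'er--type convergence lemma for \ref{IIIpp:qb1i} (cf.\ \cite[Lemma~2.2.2]{IIIPo87_B}, which this paper uses elsewhere), then combining the resulting bound with $\alpha D^f(x,x_n)\leq D^{f_n}(x,x_n)$ from $f_n\in\BP_{\alpha}(f)$ and the coercivity of $D^f(x,\cdot)$ for \ref{IIIpp:qb1ii} --- is precisely the standard argument used in the cited reference.
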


\begin{proposition}{\rm\cite{IIIQ}}
\label{IIIpp:qb2}
Let $\XX$ be a reflexive real Banach space, 
let $f\in\Gamma_0(\XX)$ be G\^ateaux differentiable 
on $\IDD\neq\emp$, let $(x_n)_{n\in\NN}\in(\IDD)^{\NN}$, 
let $C\subset\XX$ be such that $C\cap\IDD\neq\emp$, 
let $(\eta_n)_{n\in\NN}\in\ell_{+}^1(\NN)$, 
let $\alpha\in\RPP$, and let $(f_n)_{n\in\NN}$ in 
$\BP_{\alpha}(f)$ be such that $(\forall n\in\NN)$ 
$(1+\eta_n)f_n\succcurlyeq f_{n+1}$. Suppose that 
$(x_n)_{n\in\mathbb{N}}$ is quasi-Bregman 
monotone with respect to $C$ relative to 
$(f_n)_{n\in\mathbb{N}}$, that there exists $g\in\BF(f)$ 
such that for every $n\in\NN$, $g\succcurlyeq f_n$, 
and that, for every 
$y_1\in\XX$ and every $y_2\in\XX$,
\begin{equation}
\label{IIIe:4p}
\begin{cases}
y_1\in\mathfrak{W}(x_n)_{n\in\NN}\cap C\\
y_2\in\mathfrak{W}(x_n)_{n\in\NN}\cap C\\
\big(\Pair{y_1-y_2}{\nabla f_n(x_n)}\big)_{n\in\NN}
\quad\text{converges}
\end{cases}
\Rightarrow\quad y_1=y_2.
\end{equation}
Moreover, suppose that $(\forall x\in\IDD)$ 
$D^f(x,\cdot)$ is coercive. Then $(x_n)_{n\in\NN}$ converges 
weakly to a point in $C\cap\IDD$ if and only if 
$\mathfrak{W}(x_n)_{n\in\NN}\subset C\cap\IDD$.
\end{proposition}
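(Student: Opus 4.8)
The plan is to prove the two implications separately, the forward one being immediate and the reverse one carrying the bulk of the work. If $x_n\weakly\bar x$ with $\bar x\in C\cap\IDD$, then every weak sequential cluster point coincides with $\bar x$, whence $\mathfrak{W}(x_n)_{n\in\NN}=\{\bar x\}\subset C\cap\IDD$. For the converse, suppose $\mathfrak{W}(x_n)_{n\in\NN}\subset C\cap\IDD$. First I would secure boundedness: since $C\cap\IDD\neq\emp$, fix $x\in C\cap\IDD$; as $D^f(x,\cdot)$ is coercive, Proposition~\ref{IIIpp:qb1}\ref{IIIpp:qb1ii} makes $(x_n)_{n\in\NN}$ bounded, and reflexivity of $\XX$ then gives $\mathfrak{W}(x_n)_{n\in\NN}\neq\emp$. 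The strategy is the Bregman analogue of Opial's argument: I will show the cluster set is a singleton and conclude by the fact that a bounded sequence in a reflexive space with a unique weak sequential cluster point converges weakly to it.

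To prove the cluster set is a singleton, I take $y_1,y_2\in\mathfrak{W}(x_n)_{n\in\NN}\subset C\cap\IDD$ and aim to apply \eqref{IIIe:4p}; the task reduces to showing that $\big(\Pair{y_1-y_2}{\nabla f_n(x_n)}\big)_{n\in\NN}$ converges. I would first assemble two convergence facts. By Proposition~\ref{IIIpp:qb1}\ref{IIIpp:qb1i}, both $(D^{f_n}(y_1,x_n))_{n\in\NN}$ and $(D^{f_n}(y_2,x_n))_{n\in\NN}$ converge. Next, the hypothesis $(1+\eta_n)f_n\succcurlyeq f_{n+1}$ unfolds to $D^{f_{n+1}}(a,b)\leq(1+\eta_n)D^{f_n}(a,b)$ for all $a\in\dom f$ and $b\in\IDD$; since $(\eta_n)_{n\in\NN}\in\ell_{+}^1(\NN)$ and each $D^{f_n}\geq 0$, a Robbins--Siegmund type lemma makes $(D^{f_n}(a,b))_{n\in\NN}$ converge for every fixed such $a,b$, in particular for the pair $(y_1,y_2)$.

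The bridge between these facts is the three-point identity
\begin{equation}
\label{e:plan3pt}
D^{f_n}(a,c)=D^{f_n}(a,b)+D^{f_n}(b,c)+\Pair{a-b}{\nabla f_n(b)-\nabla f_n(c)},
\end{equation}
valid for $a\in\dom f$ and $b,c\in\IDD$. Specializing to $a=y_1$, $b=y_2$, $c=x_n$ and rearranging,
\begin{equation}
\label{e:planAdec}
\Pair{y_1-y_2}{\nabla f_n(x_n)}=\Pair{y_1-y_2}{\nabla f_n(y_2)}+D^{f_n}(y_2,x_n)+D^{f_n}(y_1,y_2)-D^{f_n}(y_1,x_n).
\end{equation}
The last three summands in \eqref{e:planAdec} converge by the previous paragraph, so everything hinges on the single gradient term $\big(\Pair{y_1-y_2}{\nabla f_n(y_2)}\big)_{n\in\NN}$, the linearization of $f_n$ at the fixed cluster point $y_2$ tested against $y_1-y_2$. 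This is the step I expect to be the main obstacle: Bregman distances are unchanged when an affine functional is added to $f_n$, so they cannot by themselves pin down $\nabla f_n(y_2)$, and the convergence of this term must be wrung out of the way the family $(f_n)_{n\in\NN}$ is clamped between the fixed floor $\alpha f$ and envelope $g$. The plan here is to exploit the sandwich $\alpha f\preccurlyeq f_n\preccurlyeq g$ together with the convergence of the whole family $(D^{f_n}(\cdot,y_2))_{n\in\NN}$ to control $\nabla f_n(y_2)$ on the fixed direction $y_1-y_2$.

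Once $\big(\Pair{y_1-y_2}{\nabla f_n(x_n)}\big)_{n\in\NN}$ is shown to converge, condition \eqref{IIIe:4p} forces $y_1=y_2$. Thus $\mathfrak{W}(x_n)_{n\in\NN}$ is a singleton $\{\bar x\}$ with $\bar x\in C\cap\IDD$, and the bounded sequence $(x_n)_{n\in\NN}$ in the reflexive space $\XX$, having $\bar x$ as its unique weak sequential cluster point, converges weakly to $\bar x\in C\cap\IDD$, as required.
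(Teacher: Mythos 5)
Your overall architecture is the natural one, and most of your steps are sound; note, for calibration, that the paper itself gives no proof of this proposition but defers entirely to \cite{IIIQ}, so your attempt has to be judged on its own terms. The trivial forward implication, boundedness via Proposition~\ref{IIIpp:qb1}\ref{IIIpp:qb1ii}, nonemptiness of $\mathfrak{W}(x_n)_{n\in\NN}$ by reflexivity, the three-point identity, the convergence of $(D^{f_n}(y_i,x_n))_{n\in\NN}$ by Proposition~\ref{IIIpp:qb1}\ref{IIIpp:qb1i}, and the convergence of $(D^{f_n}(y_1,y_2))_{n\in\NN}$ from $(1+\eta_n)f_n\succcurlyeq f_{n+1}$ with $(\eta_n)_{n\in\NN}\in\ell_+^1(\NN)$ are all correct. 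The gap is exactly the step you flag as ``the main obstacle'': the convergence of $\big(\Pair{y_1-y_2}{\nabla f_n(y_2)}\big)_{n\in\NN}$. This is not a technicality that the sandwich $\alpha f\preccurlyeq f_n\preccurlyeq g$ can be made to deliver. Every hypothesis at your disposal---membership in $\BP_{\alpha}(f)$, $g\succcurlyeq f_n$, $(1+\eta_n)f_n\succcurlyeq f_{n+1}$, and quasi-Bregman monotonicity of $(x_n)_{n\in\NN}$---is an inequality between Bregman distances, and Bregman distances are unchanged when a continuous affine functional is added to $f_n$. Replacing $f_n$ by $f_n+\pair{\cdot}{(-1)^nu^*}$ with $\Pair{y_1-y_2}{u^*}\neq 0$ leaves every one of these hypotheses intact but makes $\big(\Pair{y_1-y_2}{\nabla f_n(y_2)}\big)_{n\in\NN}$ oscillate. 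Consequently, no argument that uses only these hypotheses can prove that this sequence converges, and the plan announced in your last step cannot be completed as stated.

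What your computation does establish is the convergence of the affine-invariant quantity
\begin{equation*}
\Pair{y_1-y_2}{\nabla f_n(x_n)-\nabla f_n(y_2)}
=D^{f_n}(y_1,y_2)+D^{f_n}(y_2,x_n)-D^{f_n}(y_1,x_n),
\end{equation*}
since all three terms on the right converge. Passing from this to the premise of \eqref{IIIe:4p}, i.e., to the convergence of $\big(\Pair{y_1-y_2}{\nabla f_n(x_n)}\big)_{n\in\NN}$, is equivalent to the convergence of $\big(f_n(y_1)-f_n(y_2)\big)_{n\in\NN}$, which is genuinely additional information not encoded in the hypotheses. It is instructive that in the one place this paper actually uses the proposition and verifies its premise (the proof of Theorem~\ref{IIIt:2}), that information comes from extra structure: there the family is $g_n=f-\gamma_n\psi\circ L$ with $f$ fixed and $\gamma_n$ convergent, so the function-value differences $g_n(y_1)-g_n(y_2)$ converge automatically. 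To close your proof you would need either to import structure of that kind, or to work with a formulation of \eqref{IIIe:4p} whose premise is the gradient-difference sequence displayed above---which, as noted, your argument already shows to be convergent.
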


\begin{proposition}{\rm\cite{IIIQ}}
\label{IIIpp:qb3} 
Let $\XX$ be a reflexive real Banach space, 
let $f\in\Gamma_0(\XX)$ be a Legendre function, 
let $\alpha\in\RPP$, 
let $(f_n)_{n\in\NN}$ be in $\BP_{\alpha}(f)$, 
let $(x_n)_{n\in\NN}\in(\IDD)^{\NN}$, and let 
$C$ be a closed convex subset of $\XX$ such that 
$C\cap\IDD\neq\emp$. Suppose that 
$(x_n)_{n\in\mathbb{N}}$ is stationarily 
quasi-Bregman monotone with respect to $C$ relative to 
$(f_n)_{n\in\mathbb{N}}$, that $f$ satisfies 
Condition~\ref{IIIcd:2}, and that 
$(\forall x\in\IDD)$ $D^f(x,\cdot)$ is coercive. 
In addition, suppose that there exists $\beta\in\RPP$ 
such that $(\forall n\in\NN)$ 
$\beta f\succcurlyeq f_n$. 
Then $(x_n)_{n\in\NN}$ converges 
strongly to a point in $C\cap\overline{\dom}f$ 
if and only if $\varliminf D^f_C(x_n)=0$.
\end{proposition}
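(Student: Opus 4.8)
The plan is to route both implications through the single scalar statement $D^f_C(x_n)\to 0$, and then to manufacture strong convergence from it using the Bregman projections of the $x_n$ onto $C$. The device that makes the reduction work is \emph{stationarity}: in \eqref{IIIe:sqbm} the same $(\eta_n)$ and $(\varepsilon_n)$ serve every $x$, so I may pass to an infimum. Setting $d_n=\inf_{x\in C\cap\dom f}D^{f_n}(x,x_n)$, I get for every $x$ that $d_{n+1}\le D^{f_{n+1}}(x,x_{n+1})\le(1+\eta_n)D^{f_n}(x,x_n)+\varepsilon_n$, and taking the infimum over $x$ on the right yields $d_{n+1}\le(1+\eta_n)d_n+\varepsilon_n$. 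Since $(\eta_n),(\varepsilon_n)\in\ell_+^1(\NN)$, the standard summability lemma makes $(d_n)$ convergent. Because $(f_n)$ lies in $\BP_\alpha(f)$ and $\beta f\succcurlyeq f_n$, I have $\alpha D^f(x,x_n)\le D^{f_n}(x,x_n)\le\beta D^f(x,x_n)$, whence $\alpha D^f_C(x_n)\le d_n\le\beta D^f_C(x_n)$ after taking infima. Thus $\varliminf D^f_C(x_n)=0$ forces $\lim d_n=0$ and then $D^f_C(x_n)\to 0$; the hypothesis self-improves to a genuine limit, which I use in the sufficiency direction.

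For sufficiency, assume $D^f_C(x_n)\to 0$. Stationary quasi-Bregman monotonicity entails ordinary quasi-Bregman monotonicity, so by Proposition~\ref{IIIpp:qb1}\ref{IIIpp:qb1ii} and coercivity of $D^f(c_0,\cdot)$ at a fixed $c_0\in C\cap\IDD$, the sequence $(x_n)$ is bounded. Since $f$ is Legendre and $C$ is closed convex with $C\cap\IDD\neq\emp$, the projection $z_n=P^f_C(x_n)\in C\cap\IDD$ exists and $D^f(z_n,x_n)=D^f_C(x_n)\to 0$. The Pythagorean inequality for the Bregman projector gives $D^f(c_0,z_n)\le D^f(c_0,x_n)$; the right-hand side is bounded because $(D^{f_n}(c_0,x_n))_n$ converges by Proposition~\ref{IIIpp:qb1}\ref{IIIpp:qb1i} and dominates $\alpha D^f(c_0,x_n)$, so coercivity of $D^f(c_0,\cdot)$ makes $(z_n)$ bounded. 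This is the point where the boundedness of the approximating sequence is secured rather than assumed. Condition~\ref{IIIcd:2} then converts $D^f(z_n,x_n)\to 0$ into $z_n-x_n\to 0$.

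It remains to show $(x_n)$ is Cauchy. Iterating \eqref{IIIe:sqbm} and applying the two-sided comparison with $D^f$ produces, with $\tau=\prod_k(1+\eta_k)<\infty$ and $e_n=\sum_{k\ge n}\varepsilon_k\to 0$, the bound $D^f(x,x_m)\le(\tau\beta/\alpha)D^f(x,x_n)+(\tau/\alpha)e_n$ for all $m\ge n$ and all $x\in C\cap\dom f$. Choosing $x=z_n$ gives $D^f(z_n,x_m)\le\omega_n$ for $m\ge n$, where $\omega_n=(\tau\beta/\alpha)D^f_C(x_n)+(\tau/\alpha)e_n\to 0$. For arbitrary $m_k>n_k\to\infty$ the bounded sequences $(z_{n_k})$ and $(x_{m_k})$ satisfy $D^f(z_{n_k},x_{m_k})\to 0$, so Condition~\ref{IIIcd:2} yields $z_{n_k}-x_{m_k}\to 0$; together with $z_{n_k}-x_{n_k}\to 0$ this gives $x_{m_k}-x_{n_k}\to 0$, so $(x_n)$ is Cauchy and converges strongly to some $\bar x$. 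From $z_n-x_n\to 0$ with $C$ closed, $\bar x\in C$, and from $x_n\in\IDD$, $\bar x\in\overline{\dom}f$.

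Conversely, suppose $x_n\to\bar x\in C\cap\overline{\dom}f$. When $\bar x\in\IDD$ the necessity is immediate: $\bar x\in C\cap\IDD$, so $D^f_C(x_n)\le D^f(\bar x,x_n)\to 0$ by continuity of $f$ and $\nabla f$ on $\IDD$. The case $\bar x\in\overline{\dom}f\setminus\IDD$ is the step I expect to be the main obstacle: essential smoothness forces $\|\nabla f(x_n)\|\to\infty$, so the crude estimate $D^f_C(x_n)\le D^f(w,x_n)$ with a fixed $w\in C\cap\IDD$ is vacuous. The plan is to approximate $\bar x$ from inside by the line-segment principle — the points $w_t=(1-t)c_0+t\bar x$ lie in $C\cap\IDD$ and converge to $\bar x$ as $t\uparrow 1$ — and to let $t=t_n\uparrow 1$ diagonally so that $w_{t_n}$ tracks $x_n$ closely enough to force $D^f(w_{t_n},x_n)\to 0$ in spite of the gradient blow-up. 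Controlling this interplay between the rate of $x_n\to\bar x$ and the growth of $\nabla f$ near $\partial\,\dom f$, using the Legendre structure of $f$ as developed in \cite{IIIBBC01,IIIBBC03}, is the delicate part; once $\varliminf D^f_C(x_n)=0$ is obtained, the equivalence is complete.
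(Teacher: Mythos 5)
Your sufficiency direction is essentially correct, and the reduction is well organized: stationarity legitimately lets you pass to $d_n=\inf_{x\in C\cap\dom f}D^{f_n}(x,x_n)$, the two-sided comparison $\alpha D^f_C(x_n)\leq d_n\leq\beta D^f_C(x_n)$ upgrades $\varliminf D^f_C(x_n)=0$ to $D^f_C(x_n)\to 0$, and the Cauchy argument via $D^f(z_n,x_m)\leq\omega_n$ together with Condition~\ref{IIIcd:2} goes through, granting the standard facts from \cite{IIIBBC01,IIIBBC03} that for a Legendre $f$ on a reflexive space $P^f_C$ is single-valued, maps $\IDD$ into $C\cap\IDD$, and satisfies the Pythagoras-type inequality (facts the paper's definition of $P^f_C$ presupposes). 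The same is true of the easy half of necessity, where what you actually need is local boundedness of $\nabla f$ on $\IDD$ (part of essential smoothness), not continuity of $\nabla f$. Note also that the paper itself contains no proof of this proposition --- it is imported from \cite{IIIQ} --- so your argument has to stand entirely on its own.

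The genuine gap is the necessity direction when $\overline{x}\in\overline{\dom}f\setminus\IDD$, which you leave as a ``plan'', and that plan cannot be completed as described: it never invokes the quasi-Bregman monotonicity of $(x_n)_{n\in\NN}$, and without that hypothesis the implication you are trying to prove is simply false. In $\RR^2$, let $f\colon(\xi_1,\xi_2)\mapsto-\ln\xi_1-\ln\xi_2$ (Burg entropy; Legendre, satisfies Condition~\ref{IIIcd:2} and the coercivity hypothesis), let $C=\menge{(\xi,\xi)}{\xi\geq 0}$, and let $x_n=(1/n,2/n)$; then $x_n\to(0,0)\in C\cap\overline{\dom}f$, while a direct computation gives $D^f_C(x_n)=\ln(9/8)$ for every $n$. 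So no choice of interior points $w_{t_n}$ tracking $\overline{x}$ can force $D^f(w_{t_n},x_n)\to 0$ from strong convergence alone. Worse, your premise that ``essential smoothness forces $\|\nabla f(x_n)\|\to\infty$'' is exactly what the missing argument must (and does) rule out: fixing $c_0\in C\cap\IDD$, Fenchel's equality gives $D^f(c_0,x_n)=f(c_0)+f^*(\nabla f(x_n))-\pair{c_0}{\nabla f(x_n)}$; quasi-Bregman monotonicity makes $(D^{f_n}(c_0,x_n))_{n\in\NN}$ convergent (Proposition~\ref{IIIpp:qb1}\ref{IIIpp:qb1i}), hence $D^f(c_0,x_n)\leq\alpha^{-1}D^{f_n}(c_0,x_n)$ is bounded, and since $f^*-\pair{c_0}{\cdot}$ is coercive because $c_0\in\IDD$ (\cite[Fact~3.1]{IIIBBC01} applied to $f^*$, using $f^{**}=f$ by reflexivity), the sequence $(\nabla f(x_n))_{n\in\NN}$ is bounded, say by $K$. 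From there the hard case closes with no diagonal device at all: lower semicontinuity of $f$ and $D^f(c_0,x_n)\geq 0$ force $\overline{x}\in\dom f$, and for $c_t=(1-t)c_0+t\overline{x}\in C\cap\IDD$ one gets $\limsup_n D^f(c_t,x_n)\leq f(c_t)-f(\overline{x})+(1-t)K\|c_0-\overline{x}\|$, whose right-hand side tends to $0$ as $t\uparrow 1$ since $f(c_t)-f(\overline{x})\leq(1-t)\big(f(c_0)-f(\overline{x})\big)$; hence $D^f_C(x_n)\to 0$. The missing idea, in short, is that quasi-Bregman monotonicity itself bounds the gradients; your sketch takes the gradient blow-up for granted and tries to outrun it, which is pointed in the wrong direction.
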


We discuss some basic properties of 
a type of Bregman distance-based proximity operators 
in the following proposition.

\begin{proposition}
\label{IIIl:fprox}
Let $\XX$ be a reflexive real Banach space, 
let $f\in\Gamma_0(\XX)$ be G\^ateaux differentiable on 
$\IDD\neq\emp$, let $\varphi\in\Gamma_0(\XX)$, 
and let
\begin{equation}
\begin{aligned}
\label{IIIe:fprox}
\Prox_{\varphi}^f\colon\XX^*&\to 2^{\XX}\\ 
x^*&\mapsto\menge{x\in\XX}{\varphi(x)+f(x)-\pair{x}{x^*}
=\min\big(\varphi+f-x^*)(\XX)<+\infty}
\end{aligned}
\end{equation}
be \emph{$f$-proximity operator} of $\varphi$. 
Then the following hold.
\begin{enumerate}
\item
\label{IIIl:fproxi} 
$\ran\Prox_{\varphi}^f\subset\dom f\cap \dom\varphi$ and 
$\Prox_{\varphi}^f=(\partial(f+\varphi))^{-1}$.
\item
\label{IIIl:fproxii}
Suppose that $\dom\varphi\cap\IDD\neq\emp$ 
and that $\dom\partial f\cap\dom\partial\varphi\subset
\IDD$. Then the following hold.
\begin{enumerate}
\item
\label{IIIl:fproxiia}
$\ran\Prox_{\varphi}^f\subset\IDD$ 
and $\Prox_{\varphi}^f=(\nabla f+\partial\varphi)^{-1}$.
\item
\label{IIIl:fproxiib} 
$\inte(\dom f^*+\dom\varphi^*)\subset\dom\Prox_{\varphi}^f$.
\item
\label{IIIl:fproxiic}
Suppose that $f|_{\IDD}$ is strictly convex. 
Then $\Prox_{\varphi}^f$ is single-valued on its domain.
\end{enumerate}
\end{enumerate}
\end{proposition}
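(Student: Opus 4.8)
The plan is to reduce everything to Fermat's rule and the Moreau--Rockafellar sum rule. For \eqref{IIIl:fproxi}, observe that $x\in\Prox_{\varphi}^f(x^*)$ exactly when $x$ is a global minimizer of the convex function $h=\varphi+f-\pair{\cdot}{x^*}$ with finite minimal value. Since subtracting the continuous linear functional $\pair{\cdot}{x^*}$ merely shifts the subdifferential, Fermat's rule gives that $x$ minimizes $h$ if and only if $0\in\partial h(x)=\partial(\varphi+f)(x)-x^*$, i.e. if and only if $x^*\in\partial(\varphi+f)(x)$. A point admitting a subgradient lies in the domain, so the minimal value $\varphi(x)+f(x)-\pair{x}{x^*}$ is automatically finite; conversely a finite minimum forces $x\in\dom\varphi\cap\dom f$. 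This simultaneously yields $\Prox_{\varphi}^f=(\partial(f+\varphi))^{-1}$ and $\ran\Prox_{\varphi}^f\subset\dom f\cap\dom\varphi$, with no constraint qualification needed.

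For \eqref{IIIl:fproxiia}, the first step is the additivity of the subdifferentials. Since $f$ is proper, lower semicontinuous, and convex, it is continuous on $\IDD=\inte\dom f$, and the hypothesis $\dom\varphi\cap\IDD\neq\emp$ furnishes a point of $\dom\varphi$ at which $f$ is continuous; the Moreau--Rockafellar theorem then gives $\partial(f+\varphi)=\partial f+\partial\varphi$. Consequently $\ran\Prox_{\varphi}^f=\dom\partial(f+\varphi)=\dom\partial f\cap\dom\partial\varphi$, which the second hypothesis forces into $\IDD$. On $\IDD$, Gâteaux differentiability makes $\partial f$ single-valued, $\partial f=\{\nabla f\}$, so on the range the sum rule reads $\partial(f+\varphi)=\nabla f+\partial\varphi$, and inverting the relation from \eqref{IIIl:fproxi} gives $\Prox_{\varphi}^f=(\nabla f+\partial\varphi)^{-1}$.

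For \eqref{IIIl:fproxiib}, I would use that $\dom\Prox_{\varphi}^f=\ran\partial(f+\varphi)=\dom\partial\big((f+\varphi)^*\big)$, the last equality being the duality $x^*\in\partial k(x)\Leftrightarrow x\in\partial k^*(x^*)$. Because a function in $\Gamma_0(\XX)$ has nonempty subdifferential throughout the interior of its domain, $\inte\dom(f+\varphi)^*\subset\dom\partial\big((f+\varphi)^*\big)$. Finally the inequality $(f+\varphi)^*\leq f^*\infconv\varphi^*$ yields $\dom f^*+\dom\varphi^*\subset\dom(f+\varphi)^*$, whence $\inte(\dom f^*+\dom\varphi^*)\subset\inte\dom(f+\varphi)^*\subset\dom\Prox_{\varphi}^f$. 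For \eqref{IIIl:fproxiic}, suppose $x_1,x_2\in\Prox_{\varphi}^f(x^*)$. By \eqref{IIIl:fproxiia} both lie in the convex set $\IDD$, so the segment $[x_1,x_2]\subset\IDD$; being the minimizer set of a convex function, this segment consists entirely of minimizers, so $\varphi+f-\pair{\cdot}{x^*}$ is constant on it. But strict convexity of $f|_{\IDD}$ makes this restriction strictly convex, incompatible with being constant on a nondegenerate segment, so $x_1=x_2$.

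I expect the main obstacle to be \eqref{IIIl:fproxiia}: one must justify that the constraint qualification behind $\partial(f+\varphi)=\partial f+\partial\varphi$ is genuinely met, i.e. that $f$ is continuous (not merely finite and Gâteaux differentiable) at a point of $\dom\varphi$, and then correctly combine the sum rule with the hypothesis $\dom\partial f\cap\dom\partial\varphi\subset\IDD$ and with single-valuedness of $\partial f$ on $\IDD$ to pin the range inside $\IDD$. Once the range inclusion is secured, the passage $\partial f=\{\nabla f\}$ and the remaining parts are routine.
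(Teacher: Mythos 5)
Your proof is correct, and three of its four parts mirror the paper's own argument: part \ref{IIIl:fproxi} via Fermat's rule; part \ref{IIIl:fproxiia} via a sum rule under a constraint qualification (you invoke Moreau--Rockafellar through continuity of $f$ on $\IDD$, while the paper cites the Attouch--Br\'ezis sum rule) combined with single-valuedness of $\partial f$ on $\IDD$; and part \ref{IIIl:fproxiic} via the same observation that all minimizers of $\varphi+f-\pair{\cdot}{x^*}$ lie in $\IDD$, where the objective is strictly convex. Part \ref{IIIl:fproxiib} is where you genuinely diverge. The paper first upgrades the inclusion to an equality, $\inte(\dom f^*+\dom\varphi^*)=\inte\dom(f+\varphi)^*$, using Attouch--Br\'ezis, then deduces that $\varphi+f-\pair{\cdot}{x^*}$ is coercive (via a criterion from \cite{IIIBBC01}) and finally invokes attainment of the infimum for coercive functions in $\Gamma_0$ of a reflexive space. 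You instead write $\dom\Prox_{\varphi}^f=\ran\partial(f+\varphi)=\dom\partial\big((f+\varphi)^*\big)$ using the reflexive-space identity $(\partial k)^{-1}=\partial k^*$ for $k\in\Gamma_0(\XX)$, then combine nonemptiness of the subdifferential of the $\Gamma_0(\XX^*)$ function $(f+\varphi)^*$ on the interior of its domain with the elementary inclusion $\dom f^*+\dom\varphi^*\subset\dom(f+\varphi)^*$ coming from $(f+\varphi)^*\leq f^*\infconv\varphi^*$. Your route buys economy: it bypasses both the Attouch--Br\'ezis conjugacy formula and the coercivity/existence step, and in fact your \ref{IIIl:fproxiib} needs nothing beyond properness of $f+\varphi$, so the qualification $\dom\varphi\cap\IDD\neq\emp$ plays no role there. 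The trade-off is that reflexivity enters your argument through the duality $(\partial k)^{-1}=\partial k^*$ (so that $\partial(f+\varphi)^*$ takes values in $\XX$ rather than $\XX^{**}$), whereas in the paper it enters through the existence of minimizers; both uses are legitimate, and the conclusions agree.
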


\begin{proof}
Let us fix $x^*\in\XX^*$ and define 
$f_{x^*}\colon\XX\to\RX\colon x\mapsto f(x)-\pair{x}{x^*}
+f^*(x^*)$. Then $\dom f_{x^*}=\dom f$ and $\varphi+f_{x^*}
\in\Gamma_0(\XX)$. Moreover, $\partial(\varphi+f_{x^*})
=\partial(\varphi+f)-x^*$.

\ref{IIIl:fproxi}: 
By definition, $\ran\Prox_{\varphi}^f\subset
\dom f\cap\dom\varphi$. Suppose that 
$\dom f\cap\dom\varphi\neq\emp$ and let 
$x\in\dom f\cap\dom\varphi$. Then
\begin{align}
x\in\Prox_{\varphi}^fx^*&\Leftrightarrow 0\in\partial
(\varphi+f_{x^*})(x)\nonumber\\
&\Leftrightarrow 0\in\partial(\varphi+f)(x)-x^*\nonumber\\
&\Leftrightarrow x^*\in\partial(\varphi+f)(x)\nonumber\\
&\Leftrightarrow x\in
\big(\partial(\varphi+f)\big)^{-1}(x^*).
\end{align}

\ref{IIIl:fproxii}: 
Suppose that $x^*\in\inte(\dom f^*+\dom\varphi^*)$. Since 
$\dom\varphi\cap\IDD\neq\emp$, it follows from 
\cite[Theorem~1.1]{IIIAB86} and 
\cite[Theorem~2.1.3(ix)]{IIIZa02_B} that 
\begin{equation}
\label{IIe:30g}
x^*\in\inte(\dom f^*+\dom\varphi^*)=\inte\dom(f+\varphi)^*.
\end{equation}

\ref{IIIl:fproxiia}: 
Since $\dom\varphi\cap\IDD\neq\emp$, $\partial(\varphi+f)
=\partial\varphi+\partial f$ by \cite[Corollary~2.1]{IIIAB86}, 
and hence \ref{IIIl:fproxi} yields
\begin{equation}
\ran\Prox_{\varphi}^f=\dom\partial(f+\varphi)=
\dom(\partial f+\partial\varphi)=\dom\partial f\cap
\dom\partial\varphi\subset\IDD.
\end{equation}
In turn, $\ran\Prox_{\varphi}^f\subset\dom\varphi\cap\IDD$. 
We now prove that 
$\Prox_{\varphi}^f=(\nabla f+\partial\varphi)^{-1}$. 
Note that $\dom(\nabla f+\partial\varphi)
\subset\dom\varphi\cap\IDD$. Let 
$x\in\dom\varphi\cap\IDD$. Then 
$\partial(f+\varphi)(x)=\partial f(x)+\partial\varphi(x)
=\nabla f(x)+\partial\varphi(x)$ and therefore,
\begin{equation}
x\in\Prox_{\varphi}^fx^*\Leftrightarrow x^*\in
\partial(f+\varphi)(x)=\nabla f(x)+\partial
\varphi(x)\Leftrightarrow x\in(\nabla f+\partial
\varphi)^{-1}(x^*).
\end{equation}

\ref{IIIl:fproxiib}: 
We derive from \eqref{IIe:30g} and 
\cite[Fact~3.1]{IIIBBC01} that $\varphi+f_{x^*}$ is coercive. 
Hence, by \cite[Theorem~2.5.1]{IIIZa02_B}, $\varphi+f_{x^*}$ 
admits at least one minimizer, i.e., 
$x^*\in\dom\Prox_{\varphi}^f$.

\ref{IIIl:fproxiic}: 
Since $f|_{\IDD}$ is strictly convex, so is 
$(\varphi+f_{x^*})|_{\IDD}$ 
and thus, in view of \ref{IIIl:fproxiib}, 
$\varphi+f_{x^*}$ admits a unique minimizer on $\IDD$. 
However, since
\begin{equation}
\Argmin(\varphi+f_{x^*})=\ran\Prox_{\varphi}^f\subset\IDD,
\end{equation}
it follows that $\varphi+f_{x^*}$ admits a unique minimizer 
and that $\Prox_{\varphi}^f$ is therefore single-valued.
\end{proof}

\begin{proposition}
\label{IIIl:fproxprod}
Let $m$ be a strictly positive integer, let 
$(\XX_i)_{1\leq i\leq m}$ be reflexive real 
Banach spaces, and let $\XX$ be the vector 
product space $\Cart_{\!\!i=1}^{\!\!m}\XX_i$ equipped 
with the norm $x=(x_i)_{1\leq i\leq m}
\mapsto\sqrt{\sum_{i=1}^m\|x_i\|^2}$. 
For every $i\in\{1,\ldots,m\}$, 
let $f_i\in\Gamma_0(\XX_i)$ be a Legendre function 
and let $\varphi_i\in\Gamma_0(\XX_i)$ be such that 
$\dom\varphi_i\cap\IDD_i\neq\emp$. 
Set $f\colon\XX\to\RX
\colon x\mapsto\sum_{i=1}^mf_i(x_i)$ and 
$\varphi\colon\XX\to\RX\colon 
x\mapsto\sum_{i=1}^m\varphi_i(x_i)$. Then
\begin{equation}
\Big(\forall x^*=(x_i^*)_{1\leq i\leq m}
\in\Cart_{\!\!i=1}^{\!\!m}\inte(\dom f_i^*+\dom\varphi_i^*)\Big)
\quad\Prox_{\varphi}^fx^*
=\big(\Prox_{\varphi_i}^{f_i}x_i^*\big)_{1\leq i\leq m}.
\end{equation}
\end{proposition}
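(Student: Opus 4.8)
The plan is to reduce the claim to Proposition~\ref{IIIl:fprox} by exploiting the fact that $f$, $\varphi$, the duality pairing, and the associated subdifferentials all decouple across the product $\XX=\Cart_{\!\!i=1}^{\!\!m}\XX_i$. The decisive preliminary step is to verify that the separable sum $f=\sum_{i=1}^m f_i$ is itself a Legendre function on $\XX$, so that the relevant parts of Proposition~\ref{IIIl:fprox} become available for the pair $(f,\varphi)$.

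First I would record the elementary product identities $\dom f=\Cart_{\!\!i=1}^{\!\!m}\dom f_i$ and, since the product is finite, $\inte\dom f=\Cart_{\!\!i=1}^{\!\!m}\IDD_i$, together with $\nabla f(x)=(\nabla f_i(x_i))_{1\leq i\leq m}$ on $\inte\dom f$ and $\partial\varphi(x)=\Cart_{\!\!i=1}^{\!\!m}\partial\varphi_i(x_i)$ (the last coming directly from the definition~\eqref{IIIe:subdiff}, since the defining inequality decouples coordinatewise). To see that $f$ is Legendre I verify essential smoothness, namely that $\partial f=\Cart_{\!\!i=1}^{\!\!m}\partial f_i$ is single-valued and locally bounded on its domain because each $\partial f_i$ is, and essential strict convexity: $f^*=\sum_{i=1}^m f_i^*$ gives $\partial f^*=\Cart_{\!\!i=1}^{\!\!m}\partial f_i^*$, which is locally bounded, and if $C\subset\dom\partial f$ is convex with $x\neq y$ in $C$, then some coordinate $j$ has $x_j\neq y_j$ in the convex set $\IDD_j$, on which $f_j$ is strictly convex, so that term produces a strict inequality while the others remain convex; hence $f$ is strictly convex on $C$.

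Next I would verify the hypotheses of Proposition~\ref{IIIl:fprox}\ref{IIIl:fproxii} for the pair $(f,\varphi)$ and for each pair $(f_i,\varphi_i)$: the set $\dom\varphi\cap\inte\dom f=\Cart_{\!\!i=1}^{\!\!m}(\dom\varphi_i\cap\IDD_i)$ is nonempty, while essential smoothness gives $\dom\partial f=\inte\dom f$, whence $\dom\partial f\cap\dom\partial\varphi\subset\inte\dom f$ trivially (and likewise in each coordinate, since $f_i$ is essentially smooth). Using $f^*=\sum_i f_i^*$ and $\varphi^*=\sum_i\varphi_i^*$ I then obtain $\inte(\dom f^*+\dom\varphi^*)=\Cart_{\!\!i=1}^{\!\!m}\inte(\dom f_i^*+\dom\varphi_i^*)$, so the hypothesis places $x^*$ in $\inte(\dom f^*+\dom\varphi^*)$; by Proposition~\ref{IIIl:fprox}\ref{IIIl:fproxiib} this lies in $\dom\Prox_\varphi^f$, and by \ref{IIIl:fproxiic} (using strict convexity of $f$ on $\inte\dom f$) the value $\Prox_\varphi^f x^*$ is a single point, just as each $\Prox_{\varphi_i}^{f_i}x_i^*$ is.

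Finally I would invoke Proposition~\ref{IIIl:fprox}\ref{IIIl:fproxiia}, which identifies $\Prox_\varphi^f=(\nabla f+\partial\varphi)^{-1}$ and $\Prox_{\varphi_i}^{f_i}=(\nabla f_i+\partial\varphi_i)^{-1}$. Combining the separability relations gives, for $x\in\inte\dom f$,
\[
x\in\Prox_\varphi^f x^*\;\Leftrightarrow\;x^*\in\nabla f(x)+\partial\varphi(x)\;\Leftrightarrow\;(\forall i)\;\; x_i^*\in\nabla f_i(x_i)+\partial\varphi_i(x_i)\;\Leftrightarrow\;(\forall i)\;\; x_i\in\Prox_{\varphi_i}^{f_i}x_i^*,
\]
which is the asserted identity. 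I expect the only genuinely delicate point to be the verification that a separable sum of Legendre functions is Legendre, in particular the essential strict convexity, where one must pass from strict convexity in a single active coordinate to strict convexity of the whole sum; once that is in place, the remainder is a routine coordinatewise translation through Proposition~\ref{IIIl:fprox}.
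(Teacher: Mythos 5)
Your proof is correct, but its endgame is genuinely different from the paper's. Both arguments share the same setup: verify that the separable sum $f$ is Legendre (you check essential smoothness and essential strict convexity directly from the definition, including the one-active-coordinate argument for strict convexity; the paper instead checks that $\partial f$ and $\partial f^*$ are single-valued on their domains and cites \cite[Theorems~5.4 and~5.6]{IIIBBC01}), note $\dom\varphi\cap\IDD=\Cart_{\!\!i=1}^{\!\!m}(\dom\varphi_i\cap\IDD_i)\neq\emp$, identify $\inte(\dom f^*+\dom\varphi^*)$ with $\Cart_{\!\!i=1}^{\!\!m}\inte(\dom f_i^*+\dom\varphi_i^*)$, and invoke Proposition~\ref{IIIl:fprox}\ref{IIIl:fproxiib}\&\ref{IIIl:fproxiic} for well-definedness and single-valuedness. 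The divergence is in how the identity itself is proved. You push separability all the way through the subdifferential: using $\nabla f(x)=(\nabla f_i(x_i))_{1\leq i\leq m}$ and $\partial\varphi(x)=\Cart_{\!\!i=1}^{\!\!m}\partial\varphi_i(x_i)$, the inclusion $x^*\in\nabla f(x)+\partial\varphi(x)$ decomposes coordinatewise and the conclusion is immediate. The paper never uses the separable-subdifferential identity for $\varphi$: it sets $x=\Prox_\varphi^f x^*$ and $q=(\Prox_{\varphi_i}^{f_i}x_i^*)_{1\leq i\leq m}$, writes the variational inequality for $x$ tested at $q$, sums the coordinate variational inequalities for $q$ and tests at $x$, adds the two to obtain $\pair{x-q}{\nabla f(x)-\nabla f(q)}\leq 0$, and concludes $x=q$ by strict monotonicity of $\nabla f$. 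Your route is more direct, but it does lean on the identity $\partial\varphi(x)=\Cart_{\!\!i=1}^{\!\!m}\partial\varphi_i(x_i)$, whose nontrivial direction requires fixing all but one coordinate of the test point and cancelling the finite values $\varphi_j(x_j)$ (valid since both sides are empty when $x\notin\dom\varphi$); this is standard and true, so your proof is sound, while the paper's monotonicity argument buys freedom from any subdifferential calculus on the separable sum $\varphi$.
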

\begin{proof}
First, we observe that $\XX^*$ is the vector 
product space $\Cart_{\!\!i=1}^{\!\!m}\XX_i^*$ equipped 
with the norm $x^*=(x_i^*)_{1\leq i\leq m}
\mapsto\sqrt{\sum_{i=1}^m\|x_i^*\|^2}$. 
Next, we derive from the definition of $f$ 
that $\dom f=\Cart_{\!\!i=1}^{\!\!m}\dom f_i$ 
and that
\begin{equation}
\partial f\colon\XX\to 
2^{\XX^*}\colon (x_i)_{1\leq i\leq m}
\mapsto\underset{i=1}{\overset{m}{\Cart}}\partial f_i(x_i).
\end{equation}
Thus, $\partial f$ is single-valued on 
\begin{equation}
\dom\partial f
=\underset{i=1}{\overset{m}{\Cart}}\dom\partial f_i
=\underset{i=1}{\overset{m}{\Cart}}\IDD_i
=\inte\Big(\underset{i=1}{\overset{m}{\Cart}}\dom f_i\Big)
=\IDD.
\end{equation}
Likewise, since 
\begin{equation}
f^*\colon\XX^*\to\RX\colon 
(x_i^*)_{1\leq i\leq m}\mapsto\sum_{i=1}^mf_i^*(x_i^*), 
\end{equation}
we deduce that $\partial f^*$ is single-valued on 
$\dom\partial f^*=\IDDS$. 
Consequently, \cite[Theorems~5.4 and~5.6]{IIIBBC01} assert that 
\begin{equation}
\label{IIIe:leg}
f\;\;\text{is a Legendre function.}
\end{equation}
In addition,
\begin{equation}
\label{IIIe:inter}
\dom\varphi\cap\IDD
=\Big(\underset{i=1}{\overset{m}{\Cart}}\dom\varphi_i\Big)
\cap\Big(\underset{i=1}{\overset{m}{\Cart}}\IDD_i\Big)
=\underset{i=1}{\overset{m}{\Cart}}(\dom\varphi_i
\cap\IDD_i)\neq\emp.
\end{equation}
Hence, 
Proposition~\ref{IIIl:fprox}\ref{IIIl:fproxiib}\&\ref{IIIl:fproxiic} 
assert that $\inte(\dom f^*+\dom\varphi^*)
\subset\dom\Prox_{\varphi}^f$ and $\Prox_{\varphi}^f$ 
is single-valued on its domain. Now set 
$x=\Prox^f_{\varphi}x^*$ and 
$q=(\Prox^{f_i}_{\varphi_i}x_i^*)_{1\leq i\leq m}$. 
We derive from Proposition~\ref{IIIl:fprox}\ref{IIIl:fproxiia} that
\begin{equation}
x=\Prox_{\varphi}^f
x^*\Leftrightarrow x
=(\nabla f+\partial\varphi)^{-1}(x^*)
\Leftrightarrow x^*-\nabla f(x)\in\partial\varphi(x).
\end{equation}
Consequently, by invoking \eqref{IIIe:subdiff}, we get
\begin{equation}
\label{IIIe:31i}
(\forall z\in\dom\varphi)\quad
\Pair{z-x}{x^*-\nabla f(x)}+\varphi(x)\leq\varphi(z).  
\end{equation}
Upon setting $z=q$ in \eqref{IIIe:31i}, we obtain
\begin{equation}
\label{IIIe:32i}
\Pair{q-x}{x^*-\nabla f(x)}+\varphi(x)\leq\varphi(q).
\end{equation}
For every $i\in\{1,\ldots,m\}$, let us set 
$q_i=\Prox_{\varphi_i}^{f_i}x_i^*$. The same characterization 
as in \eqref{IIIe:31i} yields
\begin{equation}
\label{IIIe:33i}
(\forall i\in\{1,\ldots,m\})(\forall z_i\in\dom\varphi_i)
\quad\Pair{z_i-q_i}{x_i^*-\nabla f_i(q_i)}+\varphi_i(q_i)
\leq\varphi_i(z_i).
\end{equation}
By summing these inequalities over $i\in\{1,\ldots,m\}$, 
we obtain
\begin{equation}
\label{IIIe:34i}
(\forall z\in\dom\varphi)\quad
\Pair{z-q}{x^*-\nabla f(q)}+\varphi(q)\leq\varphi(z).
\end{equation}
Upon setting $z=x$ in \eqref{IIIe:34i}, 
we get
\begin{equation}
\label{IIIe:35i}
\Pair{x-q}{\nabla f(x)-\nabla f(q)}+\varphi(q)
\leq\varphi(x).
\end{equation}
Adding \eqref{IIIe:32i} and \eqref{IIIe:35i} yields
\begin{equation}
\label{IIIe:36i}
\Pair{x-q}{\nabla f(x)-\nabla f(q)}\leq 0.
\end{equation}
Now suppose that $x\neq q$. Since $f|_{\IDD}$ is strictly 
convex, it follows from \cite[Theorem~2.4.4(ii)]{IIIZa02_B} 
that $\nabla f$ is strictly monotone, i.e.,
\begin{equation}
\Pair{x-q}{\nabla f(x)-\nabla f(q)}>0,
\end{equation}
and we reach a contradiction.
\end{proof}

In Hilbert spaces, the operator defined in \eqref{IIIe:fprox} 
reduces to the Moreau's usual proximity operator $\prox_{\varphi}$
\cite{IIIMor62b} if $f=\|\cdot\|^2/2$.  
We provide illustrations of instances in the standard Euclidean 
space $\RR^m$ in which $\Prox_{\varphi}^f$ is easier to 
evaluate than $\prox_{\varphi}$.

\begin{example}
\label{IIIex:1a}
Let $\gamma\in\RPP$, let $\phi\in\Gamma_0(\RR)$ be such that 
$\dom\phi\cap\RPP\neq\emp$, and let $\vartheta$ be 
Boltzmann-Shannon entropy, i.e.,
\begin{equation}
\vartheta\colon\xi\mapsto
\begin{cases}
\xi\ln\xi-\xi,&\text{if}\;\;\xi\in\RPP;\\
0,&\text{if}\;\;\xi=0;\\
\pinf,&\text{otherwise}.
\end{cases}
\end{equation}
Set $\varphi\colon(\xi_i)_{1\leq i\leq m}\mapsto
\sum_{i=1}^m\phi(\xi_i)$ and $f\colon(\xi_i)_{1\leq i\leq m}
\mapsto\sum_{i=1}^m\vartheta(\xi_i)$. Note that $f$ is a 
supercoercive Legendre function \cite[Sections~5 and 6]{IIIBB97}, 
and hence, Proposition~\ref{IIIl:fprox}\ref{IIIl:fproxiib} 
asserts that $\dom\Prox_{\varphi}^f=\RR^m$. 
Let $(\xi_i)_{1\leq i\leq m}\in\RR^m$, set 
$(\eta_i)_{1\leq i\leq m}
=\Prox_{\gamma\varphi}^f(\xi_i)_{1\leq i\leq m}$, 
let $W$ be the Lambert function \cite{IIIlamb96}, 
i.e., the inverse of $\xi\mapsto\xi e^{\xi}$ on $\RP$, 
and let $i\in\{1,\ldots,m\}$. 
Then $\eta_i$ can be computed as follows.
\begin{enumerate}
\item
\label{IIIex:1ii}
Let $\omega\in\RR$ and suppose that 
\begin{equation}
\phi\colon\xi\mapsto
\begin{cases}
\xi\ln\xi-\omega\xi,&\text{if}\;\;\xi\in\RPP;\\
0,&\text{if}\;\;\xi=0;\\
+\infty,&\text{otherwise}.
\end{cases}
\end{equation}
Then $\eta_i=e^{(\xi_i+\omega-1)/(\gamma+1)}$.
\item 
\label{IIIex:1iii}
Let $p\in\left[1,+\infty\right[$ 
and suppose that either $\phi=|\cdot|^p/p$ or
\begin{equation}
\phi\colon\xi\mapsto
\begin{cases}
\xi^p/p,&\text{if}\;\;\xi\in\RP;\\
+\infty,&\text{otherwise}.
\end{cases}
\end{equation}
Then 
\begin{equation}
\eta_i=
\begin{cases}
\left(\dfrac{W(\gamma(p-1)e^{(p-1)\xi_i})}{\gamma(p-1)}
\right)^{\frac{1}{p-1}},&\text{if}\;\;p\in
\left]1,+\infty\right[;\\[4mm]
e^{\xi_i-\gamma},&\text{if}\;\;p=1.
\end{cases}
\end{equation}
\item
\label{IIIex:1v}
Let $p\in\left[1,+\infty\right[$ and suppose that
\begin{equation}
\phi\colon\xi\mapsto
\begin{cases}
\xi^{-p}/p,&\text{if}\;\;\xi\in\RPP;\\
+\infty,&\text{otherwise}.
\end{cases}
\end{equation}
Then 
\begin{equation}
\eta_i=\left(\frac{W(\gamma(p+1)e^{-(p+1)\xi_i})}{\gamma(p+1)}
\right)^{\frac{-1}{p+1}}.
\end{equation}
\item
\label{IIIex:1vi}
Let $p\in\left]0,1\right[$ and suppose that
\begin{equation}
\phi\colon\xi\mapsto
\begin{cases}
-\xi^p/p,&\text{if}\;\;\xi\in\RP;\\
+\infty,&\text{otherwise}.
\end{cases}
\end{equation} 
Then 
\begin{equation}
\eta_i=\Bigg(\frac{W(\gamma(1-p)e^{(p-1)\xi_i})}{\gamma(1-p)}
\Bigg)^{\frac{1}{p-1}}.
\end{equation}
\end{enumerate}
\end{example}

\begin{example}
\label{IIIex:2a}
Let $\phi\in\Gamma_0(\RR)$ be such that 
$\dom\phi\cap\left]0,1\right[\neq\emp$ 
and let $\vartheta$ be Fermi-Dirac entropy, i.e.,
\begin{equation}
\vartheta\colon\xi\mapsto
\begin{cases}
\xi\ln\xi-(1-\xi)\ln(1-\xi),&\text{if}\;\;\xi\in\left]0,1\right[;\\
0&\text{if}\;\;\xi\in\{0,1\};\\
\pinf,&\text{otherwise}.
\end{cases}
\end{equation}
Set $\varphi\colon(\xi_i)_{1\leq i\leq m}
\mapsto\sum_{i=1}^m\phi(\xi_i)$ 
and $f\colon(\xi_i)_{1\leq i\leq m}
\mapsto\sum_{i=1}^m\vartheta(\xi_i)$. 
Note that $f$ is a cofinite Legendre function 
\cite[Sections~5 and 6]{IIIBB97}, and hence 
Proposition~\ref{IIIl:fprox}\ref{IIIl:fproxiib} 
asserts that $\dom\Prox_{\varphi}^f=\RR^m$. 
Let $(\xi_i)_{1\leq i\leq m}\in\RR^m$, 
set $(\eta_i)_{1\leq i\leq m}
=\Prox_{\varphi}^f(\xi_i)_{1\leq i\leq m}$, 
and let $i\in\{1,\ldots,m\}$. 
Then $\eta_i$ can be computed as follows.
\begin{enumerate}
\item
\label{IIIex:2i}
Let $\omega\in\RR$ and suppose that
\begin{equation}
\phi\colon\xi\mapsto
\begin{cases}
\xi\ln\xi-\omega\xi,&\text{if}\;\;\xi\in\RPP;\\
0,&\text{if}\;\;\xi=0;\\
\pinf,&\text{otherwise}.
\end{cases}
\end{equation}
Then $\eta_i=-e^{\xi_i+\omega-1}/2+\sqrt{e^{2(\xi_i+\omega-1)}/4
+e^{\xi_i+\omega-1}}$.
\item
\label{IIIex:2ii}
Suppose that
\begin{equation}
\phi\colon\xi\mapsto
\begin{cases}
(1-\xi)\ln(1-\xi)+\xi,&\text{if}\;\;\xi\in\left]-\infty,1\right[;\\
1,&\text{if}\;\;\xi=1;\\
\pinf,&\text{otherwise}.
\end{cases}
\end{equation}
Then $\eta_i=1+e^{-\xi_i}/2-\sqrt{e^{-\xi_i}+e^{-2\xi_i}/4}$.
\end{enumerate}
\end{example}

\begin{example}
\label{IIIex:3a}
Let $f\colon(\xi_i)_{1\leq i\leq m}\mapsto\sum_{i=1}^m
\vartheta(\xi_i)$, 
where $\vartheta$ is Hellinger-like function, i.e.,
\begin{equation}
\vartheta\colon\xi\mapsto
\begin{cases}
-\sqrt{1-\xi^2},&\text{if}\;\;
\xi\in\left[-1,1\right];\\
\pinf,&\text{otherwise},
\end{cases}
\end{equation}
let $\gamma\in\RPP$, and let $\varphi=f$. 
Since $f$ is a cofinite Legendre function 
\cite[Sections~5 and 6]{IIIBB97}, 
Proposition~\ref{IIIl:fprox}\ref{IIIl:fproxiib} 
asserts that $\dom\Prox_{\gamma\varphi}^f=\RR^m$. 
Let $(\xi_i)_{1\leq i\leq m}\in\RR^m$, and set 
$(\eta_i)_{1\leq i\leq m}
=\Prox_{\gamma\varphi}^f(\xi_i)_{1\leq i\leq m}$. 
Then $(\forall i\in\{1,\ldots,m\})$ 
$\eta_i=\xi_i/\sqrt{(\gamma+1)^2+\xi_i^2}$.
\end{example}

\begin{example}
\label{IIIex:4a}
Let $\gamma\in\RPP$, let $\phi\in\Gamma_0(\RR)$ be such that 
$\dom\phi\cap\RPP\neq\emp$, and let $\vartheta$ 
be Burg entropy, i.e.,
\begin{equation}
\vartheta\colon\xi\mapsto
\begin{cases}
-\ln\xi,&\text{if}\;\xi\in\RPP;\\
\pinf,&\text{otherwise}.
\end{cases}
\end{equation}
Set $\varphi\colon(\xi_i)_{1\leq i\leq m}\mapsto
\sum_{i=1}^m\phi(\xi_i)$ and $f\colon(\xi_i)_{1\leq i\leq m}
\mapsto\sum_{i=1}^m\vartheta(\xi_i)$, 
let $(\xi_i)_{1\leq i\leq m}\in\RR^m$, 
and set $(\eta_i)_{1\leq i\leq m}
=\Prox_{\varphi}^f(\xi_i)_{1\leq i\leq m}$. 
Let $i\in\{1,\ldots,m\}$. Then $\eta_i$ can be computed 
as follows.
\begin{enumerate}
\item
\label{IIex:4ai}
Suppose that $\phi=\vartheta$ and $\xi_i\in\left]-\infty,0\right]$. 
Then $\eta_i=-(1+\gamma)^{-1}\xi_i$.

\item
\label{IIex:4aii}
Suppose that $\phi\colon\xi\mapsto\alpha|\xi|$ and 
$\xi_i\in\left]-\infty,\gamma\alpha\right]$. 
Then $\eta_i=(\gamma\alpha-\xi_i)^{-1}$.
\end{enumerate}
\end{example}

The following result will be used subsequently.

\begin{lemma}
\label{IIIle:tech}
Let $\XX$ be a reflexive real Banach space, 
let $f\in\Gamma_0(\XX)$ be a Legendre function, 
let $x\in\IDD$, 
and let $(x_n)_{n\in\NN}\in(\IDD)^{\NN}$. 
Suppose that $(D^f(x,x_n))_{n\in\NN}$ is bounded, 
that $\dom f^*$ is open, and that $\nabla f^*$ 
is weakly sequentially continuous. 
Then $\mathfrak{W}(x_n)_{n\in\NN}\subset\IDD$.
\end{lemma}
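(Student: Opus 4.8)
The plan is to transport the statement to the dual space $\XX^*$ through the gradient map, exploit the hypotheses on $f^*$ and $\nabla f^*$ there, and then pull the conclusion back via the weak sequential continuity of $\nabla f^*$. Fix $y\in\mathfrak{W}(x_n)_{n\in\NN}$ together with a subsequence $(x_{k_n})_{n\in\NN}$ satisfying $x_{k_n}\weakly y$; the goal is to show $y\in\IDD$. Since $f$ is a Legendre function on a reflexive space, $\nabla f$ is a bijection from $\IDD$ onto $\IDDS$ with inverse $\nabla f^*$; hence, setting $x^*=\nabla f(x)$ and $x_n^*=\nabla f(x_n)$, we have $(x_n^*)_{n\in\NN}$ in $\IDDS=\dom\nabla f^*$ and $\nabla f^*(x_n^*)=x_n$ for every $n\in\NN$.

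First I would show that $(x_n^*)_{n\in\NN}$ is bounded. Since $x_n^*=\nabla f(x_n)$, the Fenchel--Young equality $f(x_n)+f^*(x_n^*)=\pair{x_n}{x_n^*}$ holds, and inserting it into the definition \eqref{IIIe:Bdist} of the Bregman distance gives
\begin{equation}
D^f(x,x_n)=f(x)+f^*(x_n^*)-\pair{x}{x_n^*}.
\end{equation}
Consequently $g(x_n^*)=D^f(x,x_n)-f(x)$, where $g=f^*-\pair{x}{\cdot}\in\Gamma_0(\XX^*)$, so the assumed boundedness of $(D^f(x,x_n))_{n\in\NN}$ bounds $(g(x_n^*))_{n\in\NN}$ above. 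Because $x\in\IDD=\inte\dom(f^*)^*$, the shifted conjugate $g=f^*-\pair{x}{\cdot}$ is coercive (cf. \cite[Fact~3.1]{IIIBBC01}), and a sequence on which a coercive function is bounded above must be bounded; thus $(x_n^*)_{n\in\NN}$ is bounded in $\XX^*$.

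As $\XX$, and hence $\XX^*$, is reflexive, the bounded sequence $(x_{k_n}^*)_{n\in\NN}$ admits a weakly convergent subsequence $x_{k_{n_j}}^*\weakly y^*$, along which one still has $x_{k_{n_j}}\weakly y$. I would now argue that $y^*\in\IDDS$: since $g$ is convex and lower semicontinuous, it is weakly lower semicontinuous, so
\begin{equation}
f^*(y^*)-\pair{x}{y^*}=g(y^*)\leq\varliminf_j g(x_{k_{n_j}}^*)<\pinf,
\end{equation}
which gives $y^*\in\dom f^*$; because $\dom f^*$ is open, $y^*\in\inte\dom f^*=\IDDS=\dom\nabla f^*$. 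The weak sequential continuity of $\nabla f^*$ then applies to $x_{k_{n_j}}^*\weakly y^*$ with limit in $\dom\nabla f^*$, yielding $\nabla f^*(x_{k_{n_j}}^*)\weakly\nabla f^*(y^*)$. Since $\nabla f^*(x_{k_{n_j}}^*)=x_{k_{n_j}}\weakly y$, uniqueness of weak limits forces $y=\nabla f^*(y^*)\in\IDD$, establishing $\mathfrak{W}(x_n)_{n\in\NN}\subset\IDD$.

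The step I expect to be the main obstacle is the passage, in the second paragraph, from a mere bound on the Bregman values $D^f(x,x_n)$ to a genuine norm bound on the dual iterates $(\nabla f(x_n))_{n\in\NN}$: it is precisely the interiority hypothesis $x\in\IDD$ that renders $f^*-\pair{x}{\cdot}$ coercive and thereby converts the qualitative bound on $D^f(x,x_n)$ into boundedness in $\XX^*$. The openness of $\dom f^*$ plays the complementary, and equally essential, role of ensuring that the weak cluster point $y^*$ falls inside $\dom\nabla f^*$, which is exactly what licenses the invocation of weak sequential continuity of $\nabla f^*$.
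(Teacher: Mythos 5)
Your proof is correct and complete. The paper gives no inline argument for this lemma---its ``proof'' is just a citation to the proof of Theorem~4.1 of \cite{IIIQ}---and your argument is the natural one that the lemma's hypotheses dictate (and that the cited reference follows): the identity $D^f(x,x_n)=f(x)+f^*(\nabla f(x_n))-\pair{x}{\nabla f(x_n)}$ together with coercivity of $f^*-\pair{x}{\cdot}$ (legitimate because, by reflexivity, $x\in\IDD=\inte\dom(f^*)^*$) bounds the dual sequence $(\nabla f(x_n))_{n\in\NN}$; weak lower semicontinuity and the openness of $\dom f^*$ place any weak cluster point $y^*$ of that sequence in $\dom\nabla f^*=\IDDS$; and the weak sequential continuity of $\nabla f^*$ combined with the Legendre bijection $(\nabla f)^{-1}=\nabla f^*$ identifies $y=\nabla f^*(y^*)\in\IDD$.
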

\begin{proof}
\cite[Proof of Theorem~4.1]{IIIQ}.
\end{proof}

\section{Forward-backward splitting in Banach spaces}
\label{IIIsec:FB}
\noindent
The first result in this section is a version of 
the forward-backward 
splitting algorithm in reflexive real Banach spaces 
which employs different Bregman distance-based 
proximity operators over the iterations.

\begin{theorem}
\label{IIIt:1}
Consider the setting of Problem~\ref{IIIpb:1} 
and let $f\in\Gamma_0(\XX)$ be a Legendre function 
such that $\BFS\cap\IDD\neq\emp$, 
$L(\IDD)\subset\IDDPS$, 
and $f\succcurlyeq\beta\psi\circ L$ for some $\beta\in\RPP$. 
Let $(\eta_n)_{n\in\NN}\in\ell_{+}^1(\NN)$, let $\alpha\in\RPP$, 
and let $(f_n)_{n\in\NN}$ be Legendre functions in 
$\BP_{\alpha}(f)$ such that
\begin{equation}
(\forall n\in\NN)\quad(1+\eta_n)f_n\succcurlyeq f_{n+1}.
\end{equation} 
Suppose that either $-L^*(\ran\nabla\psi)\subset\dom\varphi^*$ 
or $(\forall n\in\NN)$ $f_n$ is cofinite. 
Let $\varepsilon\in\left]0,\alpha\beta/(\alpha\beta+1)\right[$ 
and let $(\gamma_n)_{n\in\NN}$ be a sequence in $\mathbb{R}$ 
such that 
\begin{equation}
\label{IIIe:3}
(\forall n\in\NN)\quad
\varepsilon\leq\gamma_n\leq\alpha\beta(1-\varepsilon)
\quad\text{and}\quad(1+\eta_n)\gamma_n-\gamma_{n+1}
\leq\alpha\beta\eta_n.
\end{equation}
Furthermore, let $x_0\in\IDD$ and iterate
\begin{equation}
\label{IIIal:1}
(\forall n\in\NN)\quad x_{n+1}=\Prox_{\gamma_n\varphi}^{f_n}
\big(\nabla f_n(x_n)-\gamma_nL^*\nabla\psi(Lx_n)\big).
\end{equation}
Suppose in addition that $(\forall x\in\IDD)$ 
$D^f(x,\cdot)$ is coercive. Then $(x_n)_{n\in\mathbb{N}}$ 
is a bounded sequence in $\IDD$ and 
$\mathfrak{W}(x_n)_{n\in\NN}\subset\BFS$. 
Moreover, there exists $\overline{x}\in\BFS$ 
such that the following hold.
\begin{enumerate}
\item
\label{IIIt:1ii}
Suppose that $\BFS\cap\overline{\dom}f$ is a singleton. 
Then $x_n\weakly\overline{x}$.
\item
\label{IIIt:1iii}
Suppose that there exists $g\in\BF(f)$ such that 
for every $n\in\NN$, $g\succcurlyeq f_n$, 
and that, for every $y_1\in\XX$ and every $y_2\in\XX$,
\begin{equation}
\label{IIIe:4}
\begin{cases}
y_1\in\mathfrak{W}(x_n)_{n\in\NN}\\
y_2\in\mathfrak{W}(x_n)_{n\in\NN}\\
\big(\Pair{y_1-y_2}{\nabla f_n(x_n)
-\gamma_nL^*\nabla\psi(Lx_n)}\big)_{n\in\NN}
\quad\text{converges}
\end{cases}
\Rightarrow\quad y_1=y_2.
\end{equation}
In addition, suppose that one of the following holds.
\begin{enumerate}
\item
\label{IIIt:1iiia}
$\BFS\subset\IDD$.
\item
\label{IIIt:1iiib}
$\dom f^*$ is open and $\nabla f^*$ is weakly 
sequentially continuous.
\end{enumerate}
Then $x_n\weakly\overline{x}$. 
\item
\label{IIIt:1iv} 
Suppose that $f$ satisfies Condition~\ref{IIIcd:2} 
and that one of the following holds.
\begin{enumerate}
\item
\label{IIIt:1iva}
$\varphi$ is uniformly convex at 
$\overline{x}$.
\item
\label{IIIt:1ivb}
$\psi$ is uniformly convex at 
$L\overline{x}$ and there exists $\kappa\in\RPP$ such that 
$(\forall x\in\XX)$ $\|Lx\|\geq\kappa\|x\|$.
\item
\label{IIIt:1ivc}
$\varliminf D^f_{\BFS}(x_n)=0$ and there exists $\mu\in\RPP$ 
such that $(\forall n\in\NN)$ $\mu f\succcurlyeq f_n$.
\end{enumerate}
Then $x_n\to\overline{x}$.
\end{enumerate}
\end{theorem}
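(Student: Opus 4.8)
The plan is to compress the whole scheme into a single descent inequality showing that $(x_n)_{n\in\NN}$ is stationarily quasi-Bregman monotone with respect to $\BFS$ relative to $(f_n)_{n\in\NN}$, to read off from the residual of that inequality the vanishing of the objective gap, and then to feed these facts into Propositions~\ref{IIIpp:qb1}--\ref{IIIpp:qb3}. First I would check that the recursion is well posed: for each $n$, the inclusion $x_n\in\IDD$ and the fact that $f_n$ is Legendre give $\nabla f_n(x_n)\in\inte\dom f_n^*$, while either standing hypothesis (if $-L^*(\ran\nabla\psi)\subset\dom\varphi^*$ then $-\gamma_n L^*\nabla\psi(Lx_n)\in\dom(\gamma_n\varphi)^*$; if $f_n$ is cofinite then $\dom f_n^*=\XX^*$) places the argument $\nabla f_n(x_n)-\gamma_n L^*\nabla\psi(Lx_n)$ in $\inte(\dom f_n^*+\dom(\gamma_n\varphi)^*)$. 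Proposition~\ref{IIIl:fprox} then yields $x_{n+1}\in\IDD$ and single-valuedness, so by induction $(x_n)_{n\in\NN}$ stays in $\IDD$.

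The core is one estimate, valid for a fixed $x\in\BFS\cap\dom f$. Using the subdifferential characterization of the backward step, $\nabla f_n(x_n)-\gamma_n L^*\nabla\psi(Lx_n)-\nabla f_n(x_{n+1})\in\gamma_n\partial\varphi(x_{n+1})$, testing the subgradient inequality at $x_{n+1}$ against $x$, multiplying by $\gamma_n$, and substituting the three-point identity for $D^{f_n}$ together with the two Bregman identities for $\psi$, I obtain
\[
D^{f_n}(x,x_{n+1})\le D^{f_n}(x,x_n)-D^{f_n}(x_{n+1},x_n)+\gamma_n\big(\Phi(x)-\Phi(x_{n+1})\big)-\gamma_n D^\psi(Lx,Lx_n)+\gamma_n D^\psi(Lx_{n+1},Lx_n),
\]
where $\Phi=\varphi+\psi\circ L$. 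The one term of indefinite sign, $\gamma_n D^\psi(Lx_{n+1},Lx_n)$, I would absorb into $-D^{f_n}(x_{n+1},x_n)$ by chaining $f_n\succcurlyeq\alpha f\succcurlyeq\alpha\beta\,\psi\circ L$, which gives $D^{f_n}(x_{n+1},x_n)\ge\alpha\beta D^\psi(Lx_{n+1},Lx_n)$, and by invoking $\gamma_n\le\alpha\beta(1-\varepsilon)$. Since $x$ minimizes $\Phi$ we have $\Phi(x)-\Phi(x_{n+1})\le0$, and $-\gamma_n D^\psi(Lx,Lx_n)\le0$; applying $(1+\eta_n)f_n\succcurlyeq f_{n+1}$ on the left-hand side then produces
\[
D^{f_{n+1}}(x,x_{n+1})\le(1+\eta_n)D^{f_n}(x,x_n)-b_n,
\]
with the nonnegative residual $b_n=(\alpha\beta-\gamma_n)D^\psi(Lx_{n+1},Lx_n)+\gamma_n(\Phi(x_{n+1})-\Phi(x))+\gamma_n D^\psi(Lx,Lx_n)$. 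This is stationary quasi-Bregman monotonicity with $\varepsilon_n\equiv0$, and a standard summability argument ($\sum_n\eta_n<\infty$) delivers both convergence of $(D^{f_n}(x,x_n))_{n\in\NN}$ and $\sum_n b_n<\infty$.

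The asymptotics follow. Proposition~\ref{IIIpp:qb1} gives boundedness of $(x_n)_{n\in\NN}$ (through coercivity of $D^f(x,\cdot)$) and convergence of $(D^{f_n}(x,x_n))_{n\in\NN}$; from $\sum_n b_n<\infty$ with $\gamma_n\ge\varepsilon$ and $\alpha\beta-\gamma_n\ge\alpha\beta\varepsilon$ I read off $D^\psi(Lx_{n+1},Lx_n)\to0$, $D^\psi(Lx,Lx_n)\to0$, and $\Phi(x_n)\to\min\Phi$, so weak lower semicontinuity of $\Phi\in\Gamma_0(\XX)$ forces $\WC(x_n)_{n\in\NN}\subset\BFS$. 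For \ref{IIIt:1ii}, weak closedness of $\overline{\dom}f$ gives $\WC(x_n)_{n\in\NN}\subset\BFS\cap\overline{\dom}f$, so if the latter is a singleton the bounded sequence has a unique weak cluster point and converges weakly. For \ref{IIIt:1iii} I would apply Proposition~\ref{IIIpp:qb2}, whose monotonicity, compatibility, domination ($g\succcurlyeq f_n$), and coercivity hypotheses are in hand; the inclusion $\WC(x_n)_{n\in\NN}\subset\IDD$ required there follows either from $\BFS\subset\IDD$ or, under \ref{IIIt:1iiib}, from Lemma~\ref{IIIle:tech} via boundedness of $(D^f(x,x_n))_{n\in\NN}$.

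For \ref{IIIt:1iva}--\ref{IIIt:1ivb} I would obtain strong convergence directly from $\Phi(x_n)\to\min\Phi$: applying uniform convexity at $\overline x$ (respectively at $L\overline x$) to the midpoint $\tfrac12(\overline x+x_n)$ and adding the convexity inequality for $\psi\circ L$ (respectively for $\varphi$) yields $\tfrac14\phi(\|\overline x-x_n\|)\le\tfrac12(\Phi(x_n)-\min\Phi)$ (respectively the same bound on $\phi(\|L(\overline x-x_n)\|)$), whence $x_n\to\overline x$, the second case using $\|Lx\|\ge\kappa\|x\|$; for \ref{IIIt:1ivc} I would apply Proposition~\ref{IIIpp:qb3} directly. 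I expect the genuinely delicate step to be \ref{IIIt:1iii}: Proposition~\ref{IIIpp:qb2} phrases its separation condition through $\nabla f_n(x_n)$, whereas hypothesis~\eqref{IIIe:4} is phrased through the full forward step $\nabla f_n(x_n)-\gamma_n L^*\nabla\psi(Lx_n)$, so at two weak cluster points $y_1,y_2$ one must show the extra term $\gamma_n\Pair{Ly_1-Ly_2}{\nabla\psi(Lx_n)}$ is asymptotically harmless. The established $D^\psi(Ly_i,Lx_n)\to0$ (valid once $y_i\in\IDD$) makes $\Pair{Ly_1-Ly_2}{\nabla\psi(Lx_n)}$ converge; reconciling \eqref{IIIe:4} with the abstract separation condition, where I expect the compatibility bound $(1+\eta_n)\gamma_n-\gamma_{n+1}\le\alpha\beta\eta_n$ to enter against the varying step sizes, is the main obstacle, the sign bookkeeping in the core inequality being a close second.
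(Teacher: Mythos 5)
Your proposal is correct, and it is built around a genuinely different organizing device than the paper's proof. The paper never runs the quasi-Bregman-monotonicity analysis relative to $(f_n)_{n\in\NN}$: it introduces the perturbed functions $g_n=f_n-\gamma_n\psi\circ L$ (restricted to $\IDD$), shows via \eqref{IIIe:3} that $g_n\succcurlyeq\varepsilon\alpha f$ (hence each $g_n$ is convex and $g_n\in\BP_{\varepsilon\alpha}(f)$) and that $(1+\omega\eta_n)g_n\succcurlyeq g_{n+1}$ with $\omega=1+1/\varepsilon$, rewrites the iteration as $\nabla g_n(x_n)-\nabla g_n(x_{n+1})\in\gamma_n\partial\Phi(x_{n+1})$, and obtains stationary quasi-Bregman monotonicity with respect to $\BFS$ relative to $(g_n)_{n\in\NN}$. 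The payoff of this change of distance-generating functions lands exactly on what you call the ``main obstacle'': since $\nabla g_n(x_n)=\nabla f_n(x_n)-\gamma_nL^*\nabla\psi(Lx_n)$, hypothesis \eqref{IIIe:4} instantly yields condition \eqref{IIIe:4p} of Proposition~\ref{IIIpp:qb2} for $(g_n)_{n\in\NN}$ (with majorant $g\succcurlyeq f_n\succcurlyeq g_n$), so no reconciliation between the two pairings is ever needed. Your obstacle is thus an artifact of staying with $(f_n)_{n\in\NN}$ --- but it is removable, and your sketch of the removal is sound: for cluster points $y_1,y_2\in\BFS\cap\IDD$ (guaranteed under \ref{IIIt:1iiia} or \ref{IIIt:1iiib}), your residual bound gives $D^{\psi}(Ly_i,Lx_n)\to0$, whence by subtraction $\Pair{Ly_1-Ly_2}{\nabla\psi(Lx_n)}\to\psi(Ly_1)-\psi(Ly_2)$; moreover \eqref{IIIe:3} gives $\gamma_{n+1}\geq\gamma_n-\alpha\beta\eta_n$, so $(\gamma_n)_{n\in\NN}$ converges by \cite[Lemma~2.2.2]{IIIPo87_B} --- the paper invokes this lemma for precisely this purpose in the proof of Theorem~\ref{IIIt:2} --- and hence $\gamma_n\Pair{Ly_1-Ly_2}{\nabla\psi(Lx_n)}$ converges, which transfers convergence of $\Pair{y_1-y_2}{\nabla f_n(x_n)}$ to the forward-step pairing and lets \eqref{IIIe:4} conclude $y_1=y_2$. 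So your route does close; it just needs this bridge written out.

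As for what each approach buys: your explicit residual $b_n$ delivers $\Phi(x_n)\to\inf\Phi(\XX)$ and the extra facts $D^{\psi}(Lx_{n+1},Lx_n)\to0$, $D^{\psi}(Lx,Lx_n)\to0$ in one stroke, where the paper extracts the objective convergence by a separate monotonicity-plus-limit argument. More notably, your treatment of \ref{IIIt:1iva}--\ref{IIIt:1ivb} (midpoint uniform convexity tested against $\Phi(x_n)\to\inf\Phi(\XX)$) is simpler than the paper's, which passes through $z_n^*\in\partial\Phi(z_n)$, the limit $\pair{z_n-\overline{x}}{z_n^*}\to0$, and uniform monotonicity of $\partial\Phi$; indeed your argument does not even use Condition~\ref{IIIcd:2} in those two cases. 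In \ref{IIIt:1ivc} you apply Proposition~\ref{IIIpp:qb3} to $(f_n)_{n\in\NN}$ with the literal hypothesis $\mu f\succcurlyeq f_n$, while the paper applies it to $(g_n)_{n\in\NN}$ via $\mu f\succcurlyeq f_n\succcurlyeq g_n$; both work (do record that $\BFS$ is closed and convex). The cost of your route is the bridging argument above and heavier sign bookkeeping in the core inequality; the cost of the paper's route is proving convexity of each $g_n$ and tracking the modified summable sequence $(\omega\eta_n)_{n\in\NN}$, in exchange for a frictionless interface with Propositions~\ref{IIIpp:qb1}--\ref{IIIpp:qb3}.
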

\begin{proof}
We first derive from 
Proposition~\ref{IIIl:fprox}\ref{IIIl:fproxiic} 
that the operators $(\Prox_{\gamma_n\varphi}^f)_{n\in\NN}$ are 
single-valued on their domains. We also note that 
$x_0\in\IDD$. Suppose that $x_n\in\IDD$ for some 
$n\in\NN$. If $f_n$ is cofinite then 
Proposition~\ref{IIIl:fprox}\ref{IIIl:fproxiib} yields
\begin{equation}
\label{IIIe:4c}
\nabla f_n(x_n)-\gamma_n L^*\nabla\psi(Lx_n)
\in\XX^*=\dom\Prox_{\gamma_n\varphi}^{f_n}.
\end{equation} 
Otherwise,
\begin{align}
\label{IIIe:5}
\nabla f_n(x_n)-\gamma_n L^*\nabla\psi(Lx_n)
&\in\IDD_n^*+\gamma_n\dom\varphi^*
=\inte(\IDD_n^*+\gamma_n\dom\varphi^*)\nonumber\\
&\subset\inte(\dom f_n^*+\gamma_n\dom\varphi^*)
=\inte(\dom f_n^*+\dom(\gamma_n\varphi^*)).
\end{align}
Since $\inte(\dom f_n^*+\dom(\gamma_n\varphi^*))
\subset\dom\Prox_{\gamma_n\varphi}^f$ by 
Proposition~\ref{IIIl:fprox}\ref{IIIl:fproxiib}, 
we deduce from \eqref{IIIal:1}, \eqref{IIIe:4c}, \eqref{IIIe:5}, 
and Proposition~\ref{IIIl:fprox}\ref{IIIl:fproxiia} 
that $x_{n+1}$ is a well-defined 
element in $\ran\Prox_{\gamma\varphi}^{f_n}
=\dom\partial\varphi\cap\IDD_n=\dom\partial\varphi\cap\IDD\subset\IDD$. 
By reasoning by induction, we conclude that
\begin{equation}
\label{IIIe:4g}
(x_n)_{n\in\NN}\in(\IDD)^{\NN}
\quad\text{is well-defined}.
\end{equation}
Next, let us set $\Phi=\varphi+\psi\circ L$ and
\begin{equation}
\label{IIIe:5b}
\begin{aligned}
(\forall n\in\NN)\quad g_n\colon\XX&\to\RX\\ 
x&\mapsto
\begin{cases}
f_n(x)-\gamma_n\psi(Lx),&\text{if}\; x\in\IDD;\\
+\infty,&\text{otherwise}.
\end{cases}
\end{aligned}
\end{equation}
Since $L(\IDD)\subset\IDDPS$, it follows 
from \eqref{IIIe:5b} that $(\forall n\in\NN)$ 
$g_n$ is G\^ateaux differentiable on 
$\dom g_n=\IDDG_n=\IDD$. Since $\psi$ is 
continuous on $\IDDPS\supset L(\IDD)$ 
and the functions $(f_n)_{n\in\NN}$ are continuous on $\IDD$ 
\cite[Proposition~3.3]{IIIPhelps}, we deduce that 
$(\forall n\in\NN)$ $g_n$ is continuous on $\dom g_n$. 
In addition,
\begin{equation}
\label{IIIe:5c}
(\forall n\in\NN)\quad g_n-\varepsilon\alpha f=(1-\varepsilon)
(f_n-\alpha\beta\psi\circ L)+\varepsilon(f_n-\alpha f)
+\big(\alpha\beta(1-\varepsilon)-\gamma_n\big)\psi\circ L.
\end{equation}
Note that $f\succcurlyeq\beta\psi\circ L$ and $(\forall n\in\NN)$ 
$f_n\succcurlyeq\alpha f$. Hence, \eqref{IIIe:5c} yields
\begin{equation}
\label{IIIe:7}
(\forall n\in\NN)\quad f_n\succcurlyeq\alpha\beta\psi\circ L,
\end{equation}
and hence, we deduce from \eqref{IIIe:3} and \eqref{IIIe:5c} that 
$(\forall n\in\NN)$ $g_n\succcurlyeq\varepsilon\alpha f$. In turn,
\begin{multline}
(\forall n\in\NN)(\forall x\in\dom g_n)(\forall y\in\dom g_n)\quad
\Pair{x-y}{\nabla g_n(x)-\nabla g_n(y)}\\
=D^{g_n}(x,y)+D^{g_n}(y,x)\geq\varepsilon\alpha\big(D^f(x,y)+D^f(y,x)\big)
\geq 0,
\end{multline}
and it therefore follows from \cite[Theorem~2.1.11]{IIIZa02_B} that 
$(\forall n\in\NN)$ $g_n$ is convex. Consequently, 
\begin{equation}
\label{IIIe:6}
(\forall n\in\NN)\quad g_n\in\BP_{\varepsilon\alpha}(f).
\end{equation}
Set $\omega=1+1/\varepsilon$. Then
\begin{align}
\label{IIIe:8}
(\forall n\in\NN)\quad (1+\omega\eta_n)g_n-g_{n+1}
&=(1+\omega\eta_n)(f_n-\gamma_n\psi\circ L)-(f_{n+1}
-\gamma_{n+1}\psi\circ L)\nonumber\\
&=(1+\eta_n)f_n-f_{n+1}+\eta_n\varepsilon^{-1}
\left(f_n-(\gamma_n+\varepsilon\alpha\beta)\psi\circ L\right)\nonumber\\
&\;+\big(\alpha\beta\eta_n+\gamma_{n+1}-(1+\eta_n)\gamma_n\big)
\psi\circ L.
\end{align}
We thus derive from \eqref{IIIe:3} and \eqref{IIIe:7} that
\begin{equation}
\label{IIIe:9}
(\forall n\in\NN)\quad (1+\omega\eta_n)g_n\succcurlyeq g_{n+1}.
\end{equation}
By invoking \eqref{IIIal:1} and 
Proposition~\ref{IIIl:fprox}\ref{IIIl:fproxiia}, we get
\begin{equation}
(\forall n\in\NN)\quad\nabla f_n(x_n)-\gamma_nL^*\nabla\psi(Lx_n)
\in\nabla f_n(x_{n+1})+\gamma_n\partial\varphi(x_{n+1}),
\end{equation}
and therefore,
\begin{multline}
\label{IIIe:10}
(\forall n\in\NN)\quad 
\nabla f_n(x_n)-\gamma_nL^*\nabla\psi(Lx_n)
\in\nabla f_n(x_{n+1})-\gamma_n L^*\nabla\psi(Lx_{n+1})\\
+\gamma_n\big(\partial\varphi(x_{n+1})
+L^*\nabla\psi(Lx_{n+1})\big).
\end{multline}
Since \cite[Theorem~2.4.2(vii)--(viii)]{IIIZa02_B} yield
\begin{align}
\label{IIIe:11}
(\forall n\in\NN)\quad 
\partial\varphi(x_{n+1})+L^*\nabla\psi(Lx_{n+1})
&\subset\partial\varphi(x_{n+1})+L^*\big(\partial\psi(Lx_{n+1})
\big)\nonumber\\
&\subset\partial(\varphi+\psi\circ L)(x_{n+1})
=\partial\Phi(x_{n+1}),
\end{align}
we deduce from \eqref{IIIe:10} that
\begin{equation}
\label{IIIe:12}
(\forall n\in\NN)\quad \nabla g_n(x_n)-\nabla g_n(x_{n+1})
\in\gamma_n\partial\Phi(x_{n+1}).
\end{equation}
By appealing to \eqref{IIIe:subdiff} and \eqref{IIIe:12}, we get
\begin{equation}
\label{IIIe:mon}
(\forall x\in\dom\Phi\cap\dom f)(\forall n\in\NN)\quad 
\gamma_n^{-1}\Pair{x-x_{n+1}}{\nabla g_n(x_n)-\nabla g_n(x_{n+1})}
+\Phi(x_{n+1})
\leq\Phi(x),
\end{equation}
and hence, by \cite[Proposition~2.3(ii]{IIIBBC03},
\begin{multline}
\label{IIIe:14t}
(\forall x\in\dom\Phi\cap\dom f)(\forall n\in\NN)\quad 
\gamma_n^{-1}\big(D^{g_n}(x,x_{n+1})+D^{g_n}(x_{n+1},x_n)
-D^{g_n}(x,x_n)\big)\\
+\Phi(x_{n+1})\leq\Phi(x).
\end{multline}
In particular,
\begin{equation}
\label{IIIe:14}
(\forall x\in\BFS\cap\dom f)(\forall n\in\NN)\quad 
D^{g_n}(x,x_{n+1})+D^{g_n}(x_{n+1},x_n)
-D^{g_n}(x,x_n)\leq 0.
\end{equation}
By using \eqref{IIIe:9}, we deduce from \eqref{IIIe:14} that
\begin{multline}
\label{IIIe:15}
(\forall x\in\BFS\cap\dom f)(\forall n\in\NN)\quad 
D^{g_{n+1}}(x,x_{n+1})+(1+\omega\eta_n)D^{g_n}(x_{n+1},x_n)\\
\leq (1+\omega\eta_n)D^{g_n}(x,x_n),
\end{multline}
and therefore,
\begin{equation}
\label{IIIe:15t}
(\forall x\in\BFS\cap\dom f)(\forall n\in\NN)\quad 
D^{g_{n+1}}(x,x_{n+1})\leq (1+\omega\eta_n)D^{g_n}(x,x_n).
\end{equation}
This shows that $(x_n)_{n\in\NN}$ is stationarily 
quasi-Bregman monotone with respect to $\BFS$ relative 
to $(g_n)_{n\in\NN}$. Hence, we deduce from 
Proposition~\ref{IIIpp:qb1}\ref{IIIpp:qb1ii} 
that
\begin{equation}
\label{IIIe:bound}
(x_n)_{n\in\NN}\in(\IDD)^{\NN}\quad\text{is bounded}
\end{equation}
and, since $\XX$ is reflexive,
\begin{equation}
\label{IIIe:ex}
\mathfrak{W}(x_n)_{n\in\NN}\neq\emp.
\end{equation}
In addition, we derive from \eqref{IIIe:15t} and 
Proposition~\ref{IIIpp:qb1}\ref{IIIpp:qb1i} that
\begin{equation}
\label{IIIe:16}
(\forall x\in\BFS\cap\IDD)\quad\big(D^{g_n}(x,x_n)\big)_{n\in\NN}
\quad\text{converges,}
\end{equation}
and thus, since \eqref{IIIe:15} yields
\begin{align}
(\forall x\in\BFS\cap\IDD)(\forall n\in\NN)\quad 
0&\leq D^{g_n}(x_{n+1},x_n)\nonumber\\
&\leq (1+\omega\eta_n)D^{f_n}(x_{n+1},x_n)\nonumber\\
&\leq (1+\omega\eta_n)D^{f_n}(x,x_n)-D^{f_{n+1}}(x,x_{n+1}),
\end{align}
we obtain
\begin{equation}
\label{IIIe:17}
D^{g_n}(x_{n+1},x_n)\to 0.
\end{equation}
On the other hand, it follows from \eqref{IIIe:6} that
\begin{equation}
(\forall n\in\NN)\quad\varepsilon\alpha D^f(x_{n+1},x_n)
\leq D^{g_n}(x_{n+1},x_n),
\end{equation}
and hence, \eqref{IIIe:17} yields
\begin{equation}
\label{IIIe:18}
D^f(x_{n+1},x_n)\to 0.
\end{equation}
Now, it follows from \eqref{IIIe:14t} that
\begin{equation}
(\forall n\in\NN)\quad
\Phi(x_{n+1})\leq\gamma_n^{-1}\big(D^{g_n}(x_n,x_{n+1})
+D^{g_n}(x_{n+1},x_n)\big)+\Phi(x_{n+1})\leq\Phi(x_n),
\end{equation}
which shows that $(\Phi(x_n))_{n\in\NN}$ is decreasing 
and hence, since it is bounded from below by $\inf\Phi(\XX)$, 
it is convergent. However, \eqref{IIIe:14t} 
and \eqref{IIIe:15t} yield
\begin{align}
\label{IIIe:20}
&(\forall x\in\BFS\cap\IDD)(\forall n\in\NN)\nonumber\\
&\varepsilon^{-1}
\Bigg(\dfrac{1}{1+\omega\eta_n}D^{g_{n+1}}(x,x_{n+1})
+D^{g_n}(x_{n+1},x_n)-D^{g_n}(x,x_n)\Bigg)
+\Phi(x_{n+1})\nonumber\\
&\leq\gamma_n^{-1}
\Bigg(\dfrac{1}{1+\omega\eta_n}D^{g_{n+1}}(x,x_{n+1})
+D^{g_n}(x_{n+1},x_n)-D^{g_n}(x,x_n)\Bigg)
+\Phi(x_{n+1})\nonumber\\
&\leq\Phi(x).
\end{align}
Since $\eta_n\to 0$, by taking the limit in 
\eqref{IIIe:20} and then 
using \eqref{IIIe:16} and \eqref{IIIe:17}, we get
\begin{equation}
\inf\Phi(\XX)\leq\lim\Phi(x_n)\leq\inf\Phi(\XX),
\end{equation}
and thus, 
\begin{equation}
\label{IIIe:20b1}
\Phi(x_n)\to\inf\Phi(\XX).
\end{equation}
We now show that
\begin{equation}
\label{IIIe:sol}
\mathfrak{W}(x_n)_{n\in\NN}\subset\BFS.
\end{equation}
To this end, suppose that $x\in\mathfrak{W}(x_n)_{n\in\NN}$, 
i.e., $x_{k_n}\weakly x$. Since $\Phi$ is weakly 
lower semicontinuous \cite[Theorem~2.2.1]{IIIZa02_B}, 
by \eqref{IIIe:20b1},
\begin{equation}
\inf\Phi(\XX)\leq\Phi(x)\leq\varliminf
\Phi(x_{k_n})=\lim\Phi(x_n)=\inf\Phi(\XX).
\end{equation}
This yields $\Phi(x)=\inf\Phi(\XX)$, 
i.e., $x\in\Argmin\Phi=\BFS$.

\ref{IIIt:1ii}:
Let $\overline{x}\in\mathfrak{W}(x_n)_{n\in\NN}$. 
Since \eqref{IIIe:bound} and \eqref{IIIe:sol} 
imply that $\mathfrak{W}(x_n)_{n\in\NN}
\subset\BFS\cap\overline{\dom}f$, we obtain 
$\mathfrak{W}(x_n)_{n\in\NN}=\{\overline{x}\}$, 
and in turn, \eqref{IIIe:ex} yields $x_n\weakly\overline{x}$.

\ref{IIIt:1iii}: 
In view of \eqref{IIIe:sol} and Proposition~\ref{IIIpp:qb2}, 
it suffices to show that $\mathfrak{W}(x_n)_{n\in\NN}\subset\IDD$.

\ref{IIIt:1iiia}: 
We have $\mathfrak{W}(x_n)_{n\in\NN}\subset\BFS\subset\IDD$.

\ref{IIIt:1iiib}: 
This follows from Lemma~\ref{IIIle:tech}.

\ref{IIIt:1iv}: 
Let $\overline{x}\in\BFS\cap\IDD$. 
Since $f$ satisfies Condition~\ref{IIIcd:2}, 
\eqref{IIIe:18} yields
\begin{equation}
\label{IIIe:28}
x_{n+1}-x_n\to 0.
\end{equation}
Now set
\begin{equation}
(\forall n\in\NN)\quad z_n=x_{n+1}
\quad\text{and}\quad 
z_n^*=\gamma_n^{-1}\big(\nabla g_n(x_n)-\nabla g_n(z_n)\big).
\end{equation} 
Then \eqref{IIIe:12} and \eqref{IIIe:28} imply that 
\begin{equation}
\label{IIIe:29}
(\forall n\in\NN)\quad z_n^*\in\partial\Phi(z_n)
\quad\text{and}\quad 
z_n-x_n\to 0.
\end{equation} 
Since \eqref{IIIe:15} yields
\begin{align}
(\forall n\in\NN)\quad D^{g_{n+1}}(\overline{x},x_{n+1})
&=D^{g_{n+1}}(\overline{x},z_n)\nonumber\\
&\leq(1+\omega\eta_n)D^{g_n}(\overline{x},z_n)\nonumber\\
&=(1+\omega\eta_n)D^{g_n}(\overline{x},x_{n+1})\nonumber\\
&\leq(1+\omega\eta_n)D^{g_n}(\overline{x},x_n),
\end{align}
we deduce that
\begin{equation}
\label{IIIe:30g}
(\forall n\in\NN)\quad(1+\omega\eta_n)^{-1}
D^{g_{n+1}}(\overline{x},x_{n+1})\leq D^{g_n}(\overline{x},z_n)
\leq D^{g_n}(\overline{x},x_n).
\end{equation}
Altogether, \eqref{IIIe:16} and \eqref{IIIe:30g} yield
\begin{equation}
\label{IIIe:31}
D^{g_n}(\overline{x},z_n)-D^{g_n}(\overline{x},x_n)\to 0.
\end{equation}
In \eqref{IIIe:mon}, by setting $x=\overline{x}$, we get
\begin{align}
\label{IIIe:32}
(\forall n\in\NN)\quad 
0&\leq\gamma_n\pair{z_n-\overline{x}}{z_n^*}\nonumber\\
&=\Pair{z_n-\overline{x}}{\nabla g_n(x_n)-\nabla g_n(z_n)}
\nonumber\\
&=D^{g_n}(\overline{x},x_n)
-D^{g_n}(\overline{x},z_n)-D^{g_n}(z_n,x_n)\nonumber\\
&\leq D^{g_n}(\overline{x},x_n)-D^{g_n}(\overline{x},z_n).
\end{align}
By taking to the limit in \eqref{IIIe:32} and using 
\eqref{IIIe:31}, we get 
\begin{equation}
\label{IIIe:33}
\pair{z_n-\overline{x}}{z_n^*}\to 0.
\end{equation}

\ref{IIIt:1iva}: 
In this case $\BFS=\{\overline{x}\}$. 
Since $\varphi$ is uniformly convex at $\overline{x}$, 
$\Phi$ is likewise and hence, there exists an increasing function 
$\phi\colon\left[0,+\infty\right[\to
\left[0,+\infty\right]$ that vanishes only at $0$ such that
\begin{equation}
\label{IIIe:34}
(\forall n\in\NN)(\forall\tau\in\left]0,1\right[)\quad 
\Phi(\tau\overline{x}+(1-\tau)z_n)+\tau(1-\tau)
\phi(\|z_n-\overline{x}\|)
\leq\tau\Phi(\overline{x})+(1-\tau)\Phi(z_n).
\end{equation}
It therefore follows from \cite[Page~201]{IIIZa02_B} that 
$\partial\Phi$ is uniformly monotone at 
$\overline{x}$ and its modulus of convexity is $\phi$, i.e,
\begin{equation}
\label{IIIe:34c}
(\forall n\in\NN)\quad\pair{z_n-\overline{x}}{z_n^*}
\geq\phi(\|z_n-\overline{x}\|)\geq 0.
\end{equation}
Altogether, \eqref{IIIe:33} and \eqref{IIIe:34c} 
yield $\phi(\|z_n-\overline{x}\|)\to 0$, 
and thus, $z_n\to\overline{x}$. In turn, \eqref{IIIe:29} 
yields $x_n\to\overline{x}$.

\ref{IIIt:1ivb}: 
By the same argument as in \ref{IIIt:1iva}, 
$\BFS=\{\overline{x}\}$ 
and there exists an increasing function 
$\phi\colon\left[0,+\infty\right[\to
\left[0,+\infty\right]$ that vanishes only at $0$ such that
\begin{equation}
(\forall n\in\NN)\quad
\Pair{z_n-\overline{x}}{\nabla\psi(Lz_n)-\nabla\psi(L\overline{x}}\geq 
\phi(\|Lz_n-L\overline{x}\|).
\end{equation}
In turn, it follows from \eqref{IIIe:11} and 
\cite[Theorem~2.4.2(iv)]{IIIZa02_B} that
\begin{equation}
(\forall n\in\NN)\quad\pair{z_n-\overline{x}}{z_n^*}\geq 
\phi(\|Lz_n-L\overline{x}\|).
\end{equation}
This yields $\phi(\|Lz_n-L\overline{x}\|)
\to 0$, and hence, $Lz_n\to L\overline{x}$. 
Since
\begin{equation}
(\forall n\in\NN)\quad\|Lz_n-L\overline{x}\|
\geq\kappa\|z_n-\overline{x}\|,
\end{equation}
we obtain $z_n\to\overline{x}$ and in turn, 
\eqref{IIIe:29} yields $x_n\to\overline{x}$.

\ref{IIIt:1ivc}: 
First, we observe that $\BFS$ is closed and convex 
since $\Phi\in\Gamma_0(\XX)$. 
Next, for every $n\in\NN$, since 
$\mu f\succcurlyeq f_n$, we derive from \eqref{IIIe:5b} 
that $\mu f\succcurlyeq g_n$. 
Finally, the strong convergence follows from 
Proposition~\ref{IIIpp:qb3}.
\end{proof}

The following corollary of Theorem~\ref{IIIt:1} appears to be 
the first version of the forward-backward algorithm 
outside of Hilbert spaces.

\begin{theorem}
\label{IIIt:2}
Consider the setting of Problem~\ref{IIIpb:1} and let 
$f\in\Gamma_0(\XX)$ be a Legendre function such that 
$\BFS\cap\IDD\neq\emp$, 
$L(\IDD)\subset\IDDPS$, 
and $f\succcurlyeq\beta\psi\circ L$ for some $\beta\in\RPP$. 
Suppose that either $f$ is cofinite or 
$-L^*(\ran\nabla\psi)\subset\dom\varphi^*$, 
and that $(\forall x\in\IDD)$ $D^f(x,\cdot)$ 
is coercive. Let $\varepsilon\in\left]0,\beta/(\beta+1)\right[$, 
let $(\eta_n)_{n\in\NN}\in\ell_{+}^1(\NN)$, 
and let $(\gamma_n)_{n\in\NN}$ be a sequence in $\mathbb{R}$ 
such that 
\begin{equation}
\label{IIIe:35}
(\forall n\in\NN)\quad
\varepsilon\leq\gamma_n\leq\beta(1-\varepsilon)
\quad\text{and}\quad
(1+\eta_n)\gamma_n-\gamma_{n+1}\leq\beta\eta_n.
\end{equation}
Furthermore, let $x_0\in\IDD$ and iterate
\begin{equation}
\label{IIIal:2}
(\forall n\in\NN)\quad x_{n+1}=\Prox_{\gamma_n\varphi}^f
\big(\nabla f(x_n)-\gamma_nL^*\nabla\psi(Lx_n)\big).
\end{equation}
Then there exists $\overline{x}\in\BFS$ 
such that the following hold.
\begin{enumerate}
\item
\label{IIIt:2i}
Suppose that one of the following holds.
\begin{enumerate}
\item
\label{IIIt:2ia}
$\BFS\cap\overline{\dom}f$ is a singleton.
\item
\label{IIIt:2ib}
$\nabla f$ and $\nabla\psi$ are weakly sequentially 
continuous and $\BFS\subset\IDD$.
\item
\label{IIIt:2ic}
$\dom f^*$ is open and $\nabla f$, $\nabla f^*$, 
and $\nabla\psi$ are weakly sequentially continuous.
\end{enumerate}
Then $x_n\weakly\overline{x}$.
\item
\label{IIIt:2ii}
Suppose that $f$ satisfies Condition~\ref{IIIcd:2} 
and that one of the following holds.
\begin{enumerate}
\item
\label{IIIt:2iia}
$\varphi$ is uniformly convex at $\overline{x}$.
\item
\label{IIIt:2iib}
$\psi$ is uniformly convex at $L\overline{x}$ and there exists 
$\kappa\in\RPP$ such that $(\forall x\in\XX)$ 
$\|Lx\|\geq\kappa\|x\|$.
\item
\label{IIIt:2iic} 
$\varliminf D^f_{\BFS}(x_n)=0$.
\end{enumerate}
Then $x_n\to\overline{x}$.
\end{enumerate}
\end{theorem}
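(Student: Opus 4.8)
The plan is to obtain Theorem~\ref{IIIt:2} as the special case of Theorem~\ref{IIIt:1} in which the variable Legendre functions are frozen: $f_n=f$ for every $n\in\NN$ and $\alpha=1$. First I would check that this choice meets every hypothesis of Theorem~\ref{IIIt:1}. Since $D^f\geq D^f$, the constant sequence $(f)_{n\in\NN}$ lies in $\BP_1(f)$ and satisfies $(1+\eta_n)f\succcurlyeq f$ because $\eta_n\geq 0$; with $\alpha=1$ the admissible range $\left]0,\beta/(\beta+1)\right[$ for $\varepsilon$ and the constraints \eqref{IIIe:35} on $(\gamma_n)$ coincide verbatim with the requirement on $\varepsilon$ and \eqref{IIIe:3}; and the remaining data ($f$ Legendre, $\BFS\cap\IDD\neq\emp$, $L(\IDD)\subset\IDDPS$, $f\succcurlyeq\beta\psi\circ L$, the cofinite/range alternative, and coercivity of $D^f(x,\cdot)$) are imposed directly. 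Thus Theorem~\ref{IIIt:1} applies and already yields that $(x_n)_{n\in\NN}$ is a bounded sequence in $\IDD$ with $\mathfrak{W}(x_n)_{n\in\NN}\subset\BFS$, together with a point $\overline{x}\in\BFS$.

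Conclusions \ref{IIIt:2i}\ref{IIIt:2ia} and all of \ref{IIIt:2ii} are then immediate: \ref{IIIt:2ia} is exactly Theorem~\ref{IIIt:1}\ref{IIIt:1ii}, while \ref{IIIt:2iia}, \ref{IIIt:2iib}, \ref{IIIt:2iic} are Theorem~\ref{IIIt:1}\ref{IIIt:1iv}\ref{IIIt:1iva}, \ref{IIIt:1ivb}, \ref{IIIt:1ivc} respectively---for \ref{IIIt:2iic} one takes $\mu=1$ in \ref{IIIt:1ivc}, since $f\succcurlyeq f_n$. Hence the only real work is to deduce the weak-convergence cases \ref{IIIt:2ib} and \ref{IIIt:2ic} from Theorem~\ref{IIIt:1}\ref{IIIt:1iii}. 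In both cases I take $g=f\in\BF(f)$, so the requirement $g\succcurlyeq f_n$ holds trivially; case \ref{IIIt:2ib} supplies $\BFS\subset\IDD$ (hypothesis \ref{IIIt:1iiia}) and case \ref{IIIt:2ic} supplies openness of $\dom f^*$ and weak sequential continuity of $\nabla f^*$ (hypothesis \ref{IIIt:1iiib}, invoking Lemma~\ref{IIIle:tech} to place $\mathfrak{W}(x_n)_{n\in\NN}$ in $\IDD$). The crux is therefore to verify the separation condition \eqref{IIIe:4}.

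To establish \eqref{IIIe:4}, I would fix $y_1,y_2\in\mathfrak{W}(x_n)_{n\in\NN}\subset\BFS$, both lying in $\IDD$ by the previous step, and assume $(\Pair{y_1-y_2}{\nabla f(x_n)-\gamma_nL^*\nabla\psi(Lx_n)})_{n\in\NN}$ converges, say to $\ell$. Picking subsequences $x_{k_n}\weakly y_1$ and $x_{l_n}\weakly y_2$ and using weak sequential continuity of $\nabla f$ together with $Lx_{k_n}\weakly Ly_1$ and weak sequential continuity of $\nabla\psi$, I obtain $\nabla f(x_{k_n})\weakly\nabla f(y_1)$ and $\Pair{Ly_1-Ly_2}{\nabla\psi(Lx_{k_n})}\to\Pair{Ly_1-Ly_2}{\nabla\psi(Ly_1)}=:c$, and likewise along the other subsequence with $c':=\Pair{Ly_1-Ly_2}{\nabla\psi(Ly_2)}$. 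The Moreau--Rockafellar sum rule at the minimizers gives $-L^*\nabla\psi(Ly_i)\in\partial\varphi(y_i)$, and combining monotonicity of $\partial\varphi$ with monotonicity of $\nabla\psi$ forces $c=c'$. Setting $\delta=\psi(Ly_1)-\psi(Ly_2)$ and using the auxiliary functions $g_n=f-\gamma_n\psi\circ L$ from the proof of Theorem~\ref{IIIt:1}, the three-point identity $D^{g_n}(y_1,x_n)-D^{g_n}(y_2,x_n)=(g_n(y_1)-g_n(y_2))-\Pair{y_1-y_2}{\nabla g_n(x_n)}$ rewrites the hypothesis as convergence of $g_n(y_1)-g_n(y_2)=f(y_1)-f(y_2)-\gamma_n\delta$, since $(D^{g_n}(y_i,x_n))_{n\in\NN}$ converge by \eqref{IIIe:16}; hence $\gamma_n\delta$ converges. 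If $\delta\neq 0$ then $(\gamma_n)$ converges to some $\gamma$, and passing to the limit along both subsequences gives $\ell=\Pair{y_1-y_2}{\nabla f(y_1)}-\gamma c=\Pair{y_1-y_2}{\nabla f(y_2)}-\gamma c$; if $\delta=0$ then $c=c'=0$ and, $(\gamma_n)$ being bounded, the step-size terms drop out. Either way $\Pair{y_1-y_2}{\nabla f(y_1)-\nabla f(y_2)}=0$, whence strict convexity of $f$ on $\IDD$ and strict monotonicity of $\nabla f$ give $y_1=y_2$. This verifies \eqref{IIIe:4}, so Theorem~\ref{IIIt:1}\ref{IIIt:1iii} yields $x_n\weakly\overline{x}$.

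The main obstacle is exactly this verification of \eqref{IIIe:4}: the sequence $(\gamma_n)$ need not converge, so the two weak-cluster-point subsequences could a priori select different limiting step sizes, breaking the cancellation between the $\nabla f$ and $L^*\nabla\psi\circ L$ contributions. The device that resolves it is the case split on $\delta=\psi(Ly_1)-\psi(Ly_2)$ driven by the three-point identity: convergence of the paired sequence pins down $\gamma_n\delta$, so either $\delta\neq 0$ (and then $(\gamma_n)$ genuinely converges) or $\delta=0$ (and then $c=c'=0$, so $(\gamma_n)$ is irrelevant). I would double-check two supporting points: that the sum rule is legitimate at $y_i$ (it is, since $\psi\circ L$ is finite and continuous on $\IDD\ni y_i$ and $y_i\in\dom\varphi$), and that $c=c'=0$ when $\delta=0$ (which follows from the two convexity inequalities for $\psi$ at $Ly_1$ and $Ly_2$ together with $c=c'$).
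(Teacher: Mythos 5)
Your proposal is correct, and its skeleton---specializing Theorem~\ref{IIIt:1} to $f_n\equiv f$, $\alpha=1$, taking $g=f$ and $\mu=1$, and obtaining \ref{IIIt:2ia} and all of \ref{IIIt:2ii} as immediate corollaries---is exactly the paper's. Where you genuinely diverge is in the crux, the verification of \eqref{IIIe:4}, and the divergence stems from a factual point: you assert that ``the sequence $(\gamma_n)$ need not converge,'' but under \eqref{IIIe:35} it must. Indeed, the second inequality there rewrites as $\beta-\gamma_{n+1}\leq(1+\eta_n)(\beta-\gamma_n)$, and since $\beta-\gamma_n\geq\beta\varepsilon>0$ and $(\eta_n)_{n\in\NN}$ is summable, the quasi-monotone sequence lemma (the paper cites \cite[Lemma~2.2.2]{IIIPo87_B}) gives $\gamma_n\to\theta$ for some $\theta\in\left[\varepsilon,\beta(1-\varepsilon)\right]$. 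The paper exploits this directly: it passes to the limit in \eqref{IIIe:36} along both subsequences, gets $\Pair{y_1-y_2}{\nabla h(y_1)-\nabla h(y_2)}=0$ for $h=f-\theta\psi\circ L$, and since $h\succcurlyeq\varepsilon f$ (from $f\succcurlyeq\beta\psi\circ L$ and $\theta\leq\beta(1-\varepsilon)$) this contradicts strict monotonicity of $\nabla f$; notably, it never uses that the cluster points are minimizers, only that they lie in $\IDD$. Your substitute argument is sound: optimality of $y_1,y_2\in\BFS\cap\IDD$ gives $-L^*\nabla\psi(Ly_i)\in\partial\varphi(y_i)$, so monotonicity of $\partial\varphi$ and of $\nabla\psi$ forces $\Pair{Ly_1-Ly_2}{\nabla\psi(Ly_1)-\nabla\psi(Ly_2)}=0$; the three-point identity together with the internal estimate \eqref{IIIe:16} shows $\gamma_n\bigl(\psi(Ly_1)-\psi(Ly_2)\bigr)$ converges; and the case split on $\delta=\psi(Ly_1)-\psi(Ly_2)$ cancels the $\psi$-terms either way, again landing on strict monotonicity of $\nabla f$. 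What each buys: the paper's route is shorter and self-contained given only $\mathfrak{W}(x_n)_{n\in\NN}\subset\IDD$, at the price of invoking Polyak's lemma; yours avoids that lemma but leans on the sum rule at the minimizers (which you justify correctly, and which also follows elementarily from G\^ateaux differentiability of $\psi\circ L$ at $y_i$) and on peeking at \eqref{IIIe:16} inside the proof of Theorem~\ref{IIIt:1}---a move the paper's own proof also makes when importing $\mathfrak{W}(x_n)_{n\in\NN}\subset\IDD$. Had you noticed the forced convergence of $(\gamma_n)$, your case analysis would have collapsed to the paper's one-line argument.
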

\begin{proof}
Set $(\forall n\in\NN)$ $f_n=f$. Then 
\begin{equation}
\label{IIIe:35b}
(\forall n\in\NN)\quad 
\begin{cases}
f_n\in\BP_1(f),\\
f\succcurlyeq f_n,\\
(1+\eta_n)f_n\succcurlyeq f_{n+1}.
\end{cases}
\end{equation}

\ref{IIIt:2ia}: 
This is a corollary of Theorem~\ref{IIIt:1}\ref{IIIt:1ii}.

\ref{IIIt:2ib}--\ref{IIIt:2ic}: 
Firstly, the proof of 
Theorem~\ref{IIIt:1}\ref{IIIt:1iiia}--\ref{IIIt:1iiib} 
shows that $\mathfrak{W}(x_n)_{n\in\NN}\subset\IDD$. 
Next, in view of Theorem~\ref{IIIt:1}\ref{IIIt:1iii}, 
it suffices to show that \eqref{IIIe:4} holds. To this end, 
suppose that $y_1$ and $y_2$ are two weak sequential cluster 
points of $(x_n)_{n\in\NN}$ such that
\begin{equation}
\label{IIIe:36}
\big(\Pair{y_1-y_2}{\nabla f(x_n)
-\gamma_nL^*\nabla\psi(Lx_n)}\big)_{n\in\NN}
\quad\text{converges}.
\end{equation}
Then, there exist two strictly increasing sequences 
$(k_n)_{n\in\NN}$ and $(l_n)_{n\in\NN}$ in $\NN$ such that 
$x_{k_n}\weakly y_1$ and $x_{l_n}\weakly y_2$. 
We derive from \eqref{IIIe:35} and 
\cite[Lemma~2.2.2]{IIIPo87_B} that there 
exists $\theta\in\left[\varepsilon,\beta(1-\varepsilon)
\right]$ such that $\gamma_n\to\theta$. Since $\nabla f$ and 
$\nabla\psi$ are weakly sequentially continuous, after taking 
the limit in \eqref{IIIe:36} along the subsequences 
$(x_{k_n})_{n\in\NN}$ and $(x_{l_n})_{n\in\NN}$, respectively, 
we get 
\begin{equation}
\label{IIIe:37}
\Pair{y_1-y_2}{\nabla f(y_1)-\theta L^*\nabla\psi(Ly_1)}
=\Pair{y_1-y_2}{\nabla f(y_2)-\theta L^*\nabla\psi(Ly_2)}.
\end{equation}
Let us define
\begin{equation}
\begin{aligned}
h\colon\XX&\to\RX\\
x&\mapsto
\begin{cases}
f(x)-\theta\psi(Lx),&\text{if}\; x\in\IDD;\\
+\infty,&\text{otherwise}.
\end{cases}
\end{aligned}
\end{equation}
Then $h$ is G\^ateaux differentiable on $\inte\dom h=\IDD$ 
and \eqref{IIIe:37} yields
\begin{equation}
\label{IIIe:38}
\Pair{y_1-y_2}{\nabla h(y_1)-\nabla h(y_2)}=0.
\end{equation}
On the other hand,
\begin{equation}
h-\varepsilon f=f-\theta\psi\circ L-\varepsilon f
=(1-\varepsilon)(f-\beta\psi\circ L)
+\big(\beta(1-\varepsilon)-\theta\big)\psi\circ L.
\end{equation}
In turn, since $f\succcurlyeq\beta\psi\circ L$ and 
$\theta\leq\beta(1-\varepsilon)$, we obtain 
$h\succcurlyeq\varepsilon f$, and hence,
\begin{equation}
D^h(y_1,y_2)\geq\varepsilon D^f(y_1,y_2)\quad\text{and}\quad 
D^h(y_2,y_1)\geq\varepsilon D^f(y_2,y_1).
\end{equation}
Therefore, \eqref{IIIe:38} yields
\begin{align}
\label{IIIe:40t}
0&=\Pair{y_1-y_2}{\nabla h(y_1)-\nabla h(y_2)}\nonumber\\
&=D^h(y_1,y_2)+D^h(y_2,y_1)\nonumber\\
&\geq\varepsilon\big(D^f(y_1,y_2)+D^f(y_2,y_1)\big)\nonumber\\
&=\varepsilon\Pair{y_1-y_2}{\nabla f(y_1)-\nabla f(y_2)}.
\end{align}
Suppose that $y_1\neq y_2$. 
Since $f|_{\IDD}$ is strictly convex, $\nabla f$ is 
strictly monotone \cite[Theorem~2.4.4(ii)]{IIIZa02_B}, 
i.e.,
\begin{equation}
\label{IIIe:40}
\Pair{y_1-y_2}{\nabla f(y_1)-\nabla f(y_2)}>0
\end{equation}
and we reach a contradiction.

\ref{IIIt:2ii}: 
The conclusions follow from \eqref{IIIe:35b} 
and Theorem~\ref{IIIt:1}\ref{IIIt:1iv}.
\end{proof}

\begin{remark}
In Problem~\ref{IIIpb:1}, suppose that $L=\Id$. 
We rewrite algorithm~\eqref{IIIal:2} as follow
\begin{equation}
\label{IIIe:42}
(\forall n\in\NN)\quad x_{n+1}
=\argmind{x\in\XX}{\Big(\varphi(x)+
\Pair{x-x_n}{\nabla\psi(x_n)}+\psi(x_n)
+\gamma_n^{-1}D^f(x,x_n)\Big)}.
\end{equation}
Another method to solve Problem~\ref{IIIpb:1} 
was proposed in \cite{IIIBred09}. In 
that method, instead of solving \eqref{IIIe:42}, 
the authors solve
\begin{equation}
\label{IIIe:43}
(\forall n\in\NN)\quad x_{n+1}
=\argmind{x\in\XX}{\Big(\varphi(x)+
\Pair{x-x_n}{\nabla\psi(x_n)}+\psi(x_n)
+\gamma_n^{-1}\|x-x_n\|^p\Big)},
\end{equation}
for some $1<p\leq 2$. The weak convergence is established 
under the assumptions that Problem~\ref{IIIpb:1} admits 
a unique solution, $\nabla\psi$ is $(p-1)$-H\"{o}lder 
continuous with constant $\beta$, and 
$0<\inf_{n\in\NN}\gamma_n\leq\sup_{n\in\NN}\gamma_n
\leq(1-\delta)/\beta$, where $0<\delta<1$. 
The high nonlinearity of the regularization in \eqref{IIIe:43} 
compared to \eqref{IIIe:42} makes the numerical 
implementation of this method difficult in general. 
Furthermore, since \eqref{IIIe:43} yields
\begin{equation}
(\forall n\in\NN)\quad 0\in\partial\varphi(x_{n+1})+\nabla\psi(x_n)
+\gamma_n^{-1}\partial\big(\|x_{n+1}-x_n\|^p\big),
\end{equation}
and since $(\forall n\in\NN)$ $\partial\big(\|x_{n+1}-x_n\|^p\big)$ 
is not separable, this method is not a splitting method.
\end{remark}

\begin{remark}
We can reformulate Problem~\ref{IIIpb:1} as the following joint 
minimization problem
\begin{equation}
\minimize{(x,y)\in V}{\varphi(x)+\psi(y)},
\end{equation}
where $V=\gra L=\menge{(x,y)\in\XX\times\YY}{y=Lx}$. 
This constrained problem is equivalent to the following 
unconstrained problem
\begin{equation}
\minimize{(x,y)\in\XX\times\YY}{\varphi(x)+\psi(y)+\iota _V(x,y)}.
\end{equation}
In \cite{BCN06}, a different coupling term between the variables $x$
and $y$ was considered and the problem considered there was 
\begin{equation}
\minimize{(x,y)\in\XX\times\YY}{\varphi(x)+\psi(y)+D^f(x,y)},
\end{equation}
in the Euclidean spaces. Their method activates $\varphi$ and $\psi$
via their so-called left and right Bregman proximity operators 
alternatively (see also \cite{BC03} for the projection setting).
This method does not require the smoothness of $\psi$ but 
it requires the computation of Bregman distance-based 
proximity operator of $\psi$.
\end{remark}

Next, we provide a particular instance of Theorem~\ref{IIIt:2} 
in finite-dimensional spaces.

\begin{corollary}
\label{IIIc:1}
In the setting of Problem~\ref{IIIpb:1}, 
suppose that $\XX$ and $\YY$ are finite-dimensional. 
Let $f\in\Gamma_0(\XX)$ be a Legendre function such that 
$\BFS\cap\IDD\neq\emp$, $L(\IDD)\subset\IDDPS$, 
$f\succcurlyeq\beta\psi\circ L$ 
for some $\beta\in\RPP$, and $\dom f^*$ is open. 
Suppose that either $f$ is cofinite or 
$-L^*(\ran\nabla\psi)\subset\dom\varphi^*$. 
Let $\varepsilon\in\left]0,\beta/(\beta+1)\right[$, 
let $(\eta_n)_{n\in\NN}\in\ell_{+}^1(\NN)$, 
and let $(\gamma_n)_{n\in\NN}$ be a sequence in $\mathbb{R}$ 
such that 
\begin{equation}
\label{IIIe:35a}
(\forall n\in\NN)\quad
\varepsilon\leq\gamma_n\leq\beta(1-\varepsilon)
\quad\text{and}\quad
(1+\eta_n)\gamma_n-\gamma_{n+1}\leq\beta\eta_n.
\end{equation}
Furthermore, let $x_0\in\IDD$ and iterate
\begin{equation}
\label{IIIal:2a}
(\forall n\in\NN)\quad x_{n+1}=\Prox_{\gamma_n\varphi}^f
\big(\nabla f(x_n)-\gamma_nL^*\nabla\psi(Lx_n)\big).
\end{equation}
Then there exists $\overline{x}\in\BFS$ 
such that $x_n\to\overline{x}$.
\end{corollary}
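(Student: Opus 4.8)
The plan is to deduce the statement from Theorem~\ref{IIIt:2}, taking advantage of the two simplifications available in finite dimensions: weak and strong convergence coincide, and every continuous map is weakly sequentially continuous. Every hypothesis of Theorem~\ref{IIIt:2} except the coercivity of $D^f(x,\cdot)$ is assumed verbatim in the corollary, so the heart of the matter is to derive that coercivity from the extra assumption that $\dom f^*$ is open.

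First I would record the regularity of the gradients involved. Since $f$ is a Legendre function, so is $f^*$, the map $\nabla f\colon\IDD\to\IDDS$ is a bijection with continuous inverse $\nabla f^*$, and $\nabla\psi$ is continuous on the open set $\IDDPS$ (a convex function that is G\^ateaux differentiable on an open subset of a finite-dimensional space is continuously differentiable there). Because weak and norm convergence agree in finite dimensions, $\nabla f$, $\nabla f^*$, and $\nabla\psi$ are then weakly sequentially continuous; together with the hypothesis that $\dom f^*$ is open, this is exactly condition~\ref{IIIt:2ic} in Theorem~\ref{IIIt:2}\ref{IIIt:2i}.

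The main step is to prove that $D^f(x,\cdot)$ is coercive for every $x\in\IDD$. Using the Fenchel identity $f(y)-\pair{y}{\nabla f(y)}=-f^*(\nabla f(y))$, I would rewrite, for $y\in\IDD$,
\begin{equation}
D^f(x,y)=f(x)+h(\nabla f(y)),\qquad\text{where}\quad h=f^*-\pair{\cdot}{x}\in\Gamma_0(\XX^*).
\end{equation}
Since $x\in\IDD$ one has $h^*=f(\cdot+x)$, so $0\in\inte\dom h^*=\inte(\dom f-x)$, and hence $h$ is coercive and has compact sublevel sets. If some $(y_k)_{k\in\NN}$ in $\IDD$ had $\|y_k\|\to+\infty$ while $D^f(x,y_k)$ remained bounded by $M$, then the points $\nabla f(y_k)$ would all lie in the compact sublevel set $\menge{w\in\XX^*}{h(w)\leq M-f(x)}$, so along a subsequence $\nabla f(y_k)\to u^*$ with $u^*$ in that set; in particular $u^*\in\dom h=\dom f^*$. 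As $\dom f^*$ is open, $u^*\in\IDDS$, where $\nabla f^*$ is continuous, forcing $y_k=\nabla f^*(\nabla f(y_k))\to\nabla f^*(u^*)\in\IDD$ and contradicting $\|y_k\|\to+\infty$. Thus $D^f(x,\cdot)$ is coercive. This is precisely where openness of $\dom f^*$ is indispensable, and I expect it to be the only genuine obstacle: if $u^*$ were allowed to be a boundary point of $\dom f^*$ the argument would break, and coercivity can indeed fail there (for instance $f=\sqrt{1+|\cdot|^2}$ on $\RR$ has $\dom f^*=[-1,1]$ and noncoercive $D^f(0,\cdot)$).

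With coercivity established, all the standing hypotheses of Theorem~\ref{IIIt:2} are in force and condition~\ref{IIIt:2ic} holds, so Theorem~\ref{IIIt:2}\ref{IIIt:2i} supplies a point $\overline{x}\in\BFS$ with $x_n\weakly\overline{x}$. Finally, since $\XX$ is finite-dimensional, weak convergence is strong convergence, and therefore $x_n\to\overline{x}$. The remaining care is only to check that the finite-dimensional regularity matches the strongest alternative~\ref{IIIt:2ic} rather than the weaker ones~\ref{IIIt:2ia}--\ref{IIIt:2ib}, which it does.
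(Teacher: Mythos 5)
Your proposal is correct and follows essentially the same route as the paper: the paper also reduces the corollary to Theorem~\ref{IIIt:2}\ref{IIIt:2ic}, deriving the coercivity of $D^f(x,\cdot)$ for $x\in\IDD$ from the openness of $\dom f^*$, except that it simply cites \cite[Lemma~7.3(ix)]{IIIBBC01} for that step while you prove it from scratch via the conjugate-duality argument ($D^f(x,y)=f(x)+h(\nabla f(y))$ with $h=f^*-\pair{\cdot}{x}$ coercive because $0\in\inte\dom h^*$). Your self-contained coercivity proof is sound, and your explicit verification that finite-dimensionality supplies the weak sequential continuity hypotheses and upgrades weak to strong convergence fills in details the paper leaves implicit.
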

\begin{proof}
Since $\dom f^*$ is open, \cite[Lemma~7.3(ix)]{IIIBBC01} 
asserts that $(\forall x\in\IDD)$ $D^f(x,\cdot)$ is coercive. 
Hence, the claim follows from Theorem~\ref{IIIt:2}\ref{IIIt:2ic}.
\end{proof}

\begin{remark}
We provide some special cases of Problem~\ref{IIIpb:1} 
and Theorem~\ref{IIIt:2}.
\begin{enumerate}
\item
Let $I$ and $K$ be totally ordered countable 
index sets. In Problem~\ref{IIIpb:1}, suppose that 
$\XX$ and $\YY$ are separable Hilbert spaces, and that 
$\psi\colon y\mapsto\sum_{k\in K}|\pair{y-r}{y_k}|^2/2$, 
where $r\in\YY$ and $(y_k)_{k\in K}$ 
is a frame in $\YY$, i.e.,
\begin{equation}
(\exists(\mu,\nu)\in\RPP^2)(\forall y\in\YY)\quad 
\mu\|y\|^2\leq\sum_{k\in K}|\pair{y}{y_k}|^2\leq\nu\|y\|^2.
\end{equation}
Then in Theorem~\ref{IIIt:2}, we can choose 
$f\colon x\mapsto\sum_{i\in I}|\pair{x}{x_i}|^2/2$, 
where $(x_i)_{i\in I}$ is a frame in $\XX$, i.e.,
\begin{equation}
(\exists(\alpha,\beta)\in\RPP^2)(\forall x\in\XX)\quad 
\alpha\|x\|^2\leq\sum_{i\in I}|\pair{x}{x_i}|^2\leq\beta\|x\|^2.
\end{equation}
It follows from \cite[Corollary~1]{IIICR14} 
that $f$ and $\psi$ are Legendre functions and that 
$\nabla f$ and $\nabla\psi$ are weakly sequentially continuous. 
Now let $x$ and $z$ be in $\XX$. Then 
\begin{align}
\label{IIIe:61}
D^{\psi}(Lx,Lz)
&=\sum_{k\in K}|\pair{Lx-Lz}{y_k}|^2/2
\leq\nu\|Lx-Lz\|^2/2\nonumber\\
&\leq\nu\|L\|^2\|x-z\|^2/2
\leq\nu|L\|^2\alpha^{-1}\sum_{i\in I}
|\pair{x-z}{x_i}|^2/2\nonumber\\
&=\nu\|L\|^2\alpha^{-1}D^f(x,z),
\end{align}
which implies that 
$f\succcurlyeq\alpha\nu^{-1}\|L\|^{-2}\psi\circ L$ 
and in addition, $D^f(x,\cdot)$ is coercive.
\item
Let $p$ and $q$ be in $\left]1,+\infty\right[$ 
and set $p^*=p/(p-1)$ and $q^*=q(p-1)$. 
In Problem~\ref{IIIpb:1}, suppose that 
$\XX=\ell^p(\NN)$ and $\YY=\ell^q(\NN)$, 
that $r\in\ell^q(\NN)$, 
that $\psi\colon y\mapsto\|y\|^q/q-\pair{y-r}{\|r\|^{q-2}r}
-\|r\|^q/q$, and that there exists 
$\kappa\in\RPP$ such that $(\forall x\in\ell^p(\NN))$ 
$\|Lx\|\geq\kappa\|x\|$. It follows from 
\cite[Theorem~4.7]{IIICi90_B} that 
$\ell^p(\NN)$ is uniformly convex and hence, 
strictly convex. Therefore, $\psi$ is 
strictly convex and supercoercive. 
The property of $L$ implies that 
$\psi\circ L$ is strictly convex and 
supercoercive, and $\varphi+\psi\circ L$ 
is likewise. In turn, Problem~\ref{IIIpb:1} 
admits a unique solution. 
Let $f\in\Gamma_0(\ell^p(\NN))$ be a cofinite Legendre 
function such that $f\succcurlyeq\beta\|L\cdot\|^p$ 
for some $\beta\in\RPP$, let $x\in\ell^p(\NN)$, 
and set
\begin{equation}
\phi\colon[0,+\infty[\to [0,+\infty]\colon 
t\mapsto\inf\limits_{\|z-x\|=t}
\big(\|x\|^p-p\pair{x-z}{\|z\|^{p-2}z}-\|z\|^p\big).
\end{equation}
Then
\begin{equation}
\label{IIIe:77}
(\forall z\in\ell^p(\NN))\quad
D^f(x,z)\geq p\beta D^{\psi}(Lx,Lz)\geq\beta\phi(\|Lz-Lx\|).
\end{equation}
If $p\in\left[2,+\infty\right[$ then we derive from 
\cite[Lemma~1.4.10]{IIIBI00_B} that
\begin{equation}
(\forall z\in\ell^p(\NN))
\quad\phi(\|Lz-Lx\|)\geq 2^{1-p}\|Lz-Lx\|^p\geq 
2^{1-p}\kappa^p\|z-x\|^p.
\end{equation}
Thus, it follows from \eqref{IIIe:77} that
\begin{equation}
(\forall z\in\ell^p(\NN))\quad 
D^f(x,z)\geq 2^{1-p}\kappa^p\beta\|z-x\|^p,
\end{equation}
and $D^f(x,\cdot)$ is therefore coercive. 
If $p\in\left]1,2\right[$ 
then we derive from \cite[Lemma~1.4.8]{IIIBI00_B} that 
\begin{equation}
\label{IIIe:78}
(\forall z\in\ell^p(\NN))\quad
\phi(\|Lz-Lx\|)\geq(\|Lz-Lx\|+\|Lz\|)^p-\|Lz\|^p
-p\|Lz\|^{p-1}\|Lz-Lx\|,
\end{equation}
and hence, \eqref{IIIe:77} yields
\begin{equation}
\label{IIIe:79}
(\forall z\in\ell^p(\NN))\; D^f(x,z)
\geq\beta\left((\|Lz\|+\|Lz-Lx\|)^p-\|Lz\|^p
-p\|Lz\|^{p-1}\|Lz-Lx\|\right).
\end{equation}
On the other hand, since
\begin{equation}
\label{IIIe:80}
\lim\limits_{\|Lz\|\to+\infty}\frac{\left(\|Lz\|
+\|Lz-Lx\|\right)^p-\|Lz\|^p-p\|Lz\|^{p-1}\|Lz-Lx\|}
{(2^p-1-p)\|Lz\|^p}=1,
\end{equation}
and since $2^p-1-p>0$, it follows from and \eqref{IIIe:79} 
and the property of $L$ that
\begin{equation}
\lim\limits_{\|z\|\to +\infty}D^f(x,z)=+\infty,
\end{equation}
and $D^f(x,\cdot)$ is therefore coercive. 
Consequently, Theorem~\ref{IIIt:2}\ref{IIIt:2ia} 
can be applied.
\end{enumerate}
\end{remark}

\section{Application to multivariate minimization}
\label{IIIsect:mul}
\noindent
We propose a variant of the forward-backward 
algorithm to solve the following multivariate 
minimization problem.

\begin{problem}
\label{IIIpb:2}
Let $m$ and $p$ be strictly positive integers, let 
$(\mathcal{X}_i)_{1\leq i\leq m}$ 
and $(\mathcal{Y}_k)_{1\leq k\leq p}$ 
be reflexive real Banach spaces. 
For every $i\in\{1,\ldots,m\}$ 
and every $k\in\{1,\ldots,p\}$, let 
$\varphi_i\in\Gamma_0(\XX_i)$, 
let $\psi_k\in\Gamma_0(\YY_k)$ be G\^ateaux 
differentiable on $\IDDPS_k\neq\emp$, 
and let $L_{ik}\colon\mathcal{X}_i\to\mathcal{Y}_k$ 
be linear and bounded. The problem is to
\begin{equation}
\label{IIIe:44}
\minimize{x_1\in\XX_1,\ldots,x_m\in\XX_m}
{\sum_{i=1}^m\varphi_i(x_i)+\sum_{k=1}^p\psi_k
\left(\sum_{i=1}^mL_{ik}x_i\right)}.
\end{equation}
Denote by $\BFS$ the set of solutions to \eqref{IIIe:44}.
\end{problem}

We derive from Theorem~\ref{IIIt:2} the following result.

\begin{proposition}
\label{IIIpp:2}
Consider the setting of Problem~\ref{IIIpb:2}. 
For every $k\in\{1,\ldots,p\}$, suppose that 
there exists $\sigma_k\in\RPP$ such that for every 
$(y_{ik})_{1\leq i\leq m}\in\IDDPS_k$ 
and every $(v_{ik})_{1\leq i\leq m}\in\IDDPS_k$ 
satisfying $\sum_{i=1}^my_{ik}\in\IDDPS_k$ 
and $\sum_{i=1}^mv_{ik}\in\IDDPS_k$, one has
\begin{equation}
\label{IIIe:45}
D^{\psi_k}\Big(\sum_{i=1}^my_{ik},\sum_{i=1}^mv_{ik}\Big)
\leq\sigma_k\sum_{i=1}^mD^{\psi_k}(y_{ik},v_{ik}).
\end{equation}
For every $i\in\{1,\ldots,m\}$, 
let $f_i\in\Gamma_0(\XX_i)$ be a Legendre function 
such that $(\forall x_i\in\IDD_i)$ $D^{f_i}(x_i,\cdot)$ 
is coercive. For every $k\in\{1,\ldots,p\}$, 
suppose that $\sum_{i=1}^mL_{ik}(\IDD_i)
\subset\IDDPS_k$, that, for every 
$i\in\{1,\ldots,m\}$, 
there exists $\beta_{ik}\in\RPP$ such that 
$f_i\succcurlyeq\beta_{ik}\psi_k\circ L_{ik}$, and 
set $\beta_k=\min_{1\leq i\leq m}\beta_{ik}$. 
In addition, suppose that 
$\BFS\cap\Cart_{\!\!i=1}^{\!\!m}\IDD_i\neq\emp$ 
and that either $(\forall i\in\{1,\ldots,m\})$ $f_i$ 
is cofinite or $(\forall i\in\{1,\ldots,m\})$ $\varphi_i$ 
is cofinite. Let $\varepsilon\in\big]0,1/\big(1+\sum_{k=1}^p
\sigma_k\beta_k^{-1}\big)\big[$, 
let $(\eta_n)_{n\in\NN}\in\ell_{+}^1(\NN)$, 
and let $(\gamma_n)_{n\in\NN}$ be a sequence in 
$\mathbb{R}$ such that 
\begin{equation}
\label{IIIe:46}
(\forall n\in\NN)\quad \varepsilon\leq\gamma_n\leq
\dfrac{1-\varepsilon}{\sum_{k=1}^p\sigma_k\beta_k^{-1}}
\quad\text{and}\quad(1+\eta_n)\gamma_n-\gamma_{n+1}\leq
\dfrac{\eta_n}{\sum_{k=1}^p\sigma_k\beta_k^{-1}}.
\end{equation}
Furthermore, let $(x_{i,0})_{1\leq i\leq m}\in
\Cart_{\!\!i=1}^{\!\!m}\IDD_i$ and iterate
\begin{equation}
\label{IIIe:63}
\begin{array}{l}
\text{for}\;n=0,1,\ldots\\
\left\lfloor
\begin{array}{l}
\text{for}\;i=1,\ldots, m\\
\left\lfloor
\begin{array}{l}
x_{i,n+1}=\Prox_{\gamma_n\varphi_i}^{f_i}\big(\nabla
f_i(x_{i,n})-\gamma_n\sum_{k=1}^pL_{ik}^*\nabla \psi_k
\big(\sum_{j=1}^mL_{jk}x_{j,n}\big)\big).
\end{array}
\right.\\
\end{array}
\right.\\
\end{array}
\end{equation}
Then there exists $(\overline{x}_i)_{1\leq i\leq m}\in\BFS$ 
such that the following hold.
\begin{enumerate}
\item
\label{IIIpp:2i} 
Suppose that $\BFS\cap\Cart_{\!\!i=1}^{\!\!m}\overline{\dom}f_i$ 
is a singleton. Then $(\forall i\in\{1,\ldots,m\})$ 
$x_{i,n}\weakly\overline{x}_i$.
\item
\label{IIIpp:2ii}  
For every $i\in\{1,\ldots,m\}$ and 
every $k\in\{1,\ldots,p\}$, suppose that 
$\nabla f_i$ and $\nabla\psi_k$ are weakly 
sequentially continuous, and that one 
of the following holds.
\begin{enumerate}
\item
\label{IIIpp:2iia}
$\dom\varphi_i\subset\IDD_i$.
\item
\label{IIIpp:2iib} 
$\dom f_i^*$ is open and $\nabla f_i^*$ 
is weakly sequentially continuous.
\end{enumerate}
Then $(\forall i\in\{1,\ldots,m\})$ 
$x_{i,n}\weakly\overline{x}_i$.
\end{enumerate}
\end{proposition}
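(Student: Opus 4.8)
The plan is to recast Problem~\ref{IIIpb:2} as a single instance of Problem~\ref{IIIpb:1} on the product spaces $\XX=\Cart_{i=1}^m\XX_i$ and $\YY=\Cart_{k=1}^p\YY_k$ and then to invoke Theorem~\ref{IIIt:2}. Concretely, I would set $\varphi\colon(x_i)_{1\leq i\leq m}\mapsto\sum_{i=1}^m\varphi_i(x_i)$, $\psi\colon(y_k)_{1\leq k\leq p}\mapsto\sum_{k=1}^p\psi_k(y_k)$, $f\colon(x_i)_{1\leq i\leq m}\mapsto\sum_{i=1}^mf_i(x_i)$, and $L\colon(x_i)_{1\leq i\leq m}\mapsto\big(\sum_{i=1}^mL_{ik}x_i\big)_{1\leq k\leq p}$, so that \eqref{IIIe:44} becomes \eqref{IIIeq:1}. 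As in the proof of Proposition~\ref{IIIl:fproxprod}, $\XX^*=\Cart_{i=1}^m\XX_i^*$, $f$ is a Legendre function with $\IDD=\Cart_{i=1}^m\IDD_i$, and $\psi$ is G\^ateaux differentiable on $\IDDPS=\Cart_{k=1}^p\IDDPS_k$ with $\nabla\psi\colon(y_k)_k\mapsto(\nabla\psi_k(y_k))_k$. Moreover, by Proposition~\ref{IIIl:fproxprod} the operator $\Prox_{\gamma_n\varphi}^f$ decouples as $(\Prox_{\gamma_n\varphi_i}^{f_i})_{1\leq i\leq m}$; since $\nabla f(x)=(\nabla f_i(x_i))_i$ and $L^*\nabla\psi(Lx)=\big(\sum_{k=1}^pL_{ik}^*\nabla\psi_k(\sum_jL_{jk}x_j)\big)_i$, the single-block recursion \eqref{IIIal:2} reproduces exactly the coordinatewise update \eqref{IIIe:63}.

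The main step is to produce the descent estimate $f\succcurlyeq\beta\,\psi\circ L$ for a suitable $\beta\in\RPP$. Fix $z\in\IDD$, so that $z_i\in\IDD_i$ and hence (each individual relation $f_i\succcurlyeq\beta_{ik}\psi_k\circ L_{ik}$ forcing $L_{ik}(\IDD_i)\subset\IDDPS_k$) every $L_{ik}z_i$ and, by the hypothesis $\sum_iL_{ik}(\IDD_i)\subset\IDDPS_k$, also $\sum_iL_{ik}z_i$ lie in $\IDDPS_k$. Taking first $x\in\IDD$, additivity of the Bregman distance and the relations $f_i\succcurlyeq\beta_{ik}\psi_k\circ L_{ik}$ with $\beta_k=\min_{1\leq i\leq m}\beta_{ik}\leq\beta_{ik}$ give, for each fixed $k$,
\begin{equation}
\sum_{i=1}^mD^{f_i}(x_i,z_i)\geq\beta_k\sum_{i=1}^mD^{\psi_k}(L_{ik}x_i,L_{ik}z_i)\geq\beta_k\sigma_k^{-1}D^{\psi_k}\Big(\sum_{i=1}^mL_{ik}x_i,\sum_{i=1}^mL_{ik}z_i\Big),
\end{equation}
where the last inequality is \eqref{IIIe:45}. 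Hence $D^{\psi_k}((Lx)_k,(Lz)_k)\leq\sigma_k\beta_k^{-1}D^f(x,z)$, and summing over $k$ yields $D^{\psi\circ L}(x,z)\leq\big(\sum_{k=1}^p\sigma_k\beta_k^{-1}\big)D^f(x,z)$, i.e.\ $f\succcurlyeq\beta\,\psi\circ L$ with $\beta=1/\sum_{k=1}^p\sigma_k\beta_k^{-1}$. This is exactly the value for which the admissible range $\varepsilon\in\big]0,1/(1+\sum_{k=1}^p\sigma_k\beta_k^{-1})\big[$ and the step-size bounds \eqref{IIIe:46} coincide with $\varepsilon\in\left]0,\beta/(\beta+1)\right[$ and \eqref{IIIe:35} of Theorem~\ref{IIIt:2}. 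I expect the delicate point to be precisely here: the clean argument applies \eqref{IIIe:45} at interior points, whereas the relation $\succcurlyeq$ is required for all $x\in\dom f$, so the bookkeeping of domains and interiorities (and the extension of the estimate from $\IDD$ to $\dom f$) must be handled with care.

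It then remains to check the standing hypotheses of Theorem~\ref{IIIt:2}. The inclusions $\sum_iL_{ik}(\IDD_i)\subset\IDDPS_k$ give $L(\IDD)\subset\IDDPS$, and $\BFS\cap\IDD\neq\emp$ follows from $\BFS\cap\Cart_{i=1}^m\IDD_i\neq\emp$ since $\IDD=\Cart_{i=1}^m\IDD_i$. For coercivity, each $D^{f_i}(x_i,\cdot)$ is nonnegative and coercive, and $\|z\|^2=\sum_i\|z_i\|^2$, so $\|z\|\to\pinf$ forces some $\|z_i\|\to\pinf$; hence $D^f(x,\cdot)=\sum_iD^{f_i}(x_i,\cdot)$ is coercive on $\IDD$ for every $x\in\IDD$. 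Finally, for the cofiniteness dichotomy: if every $f_i$ is cofinite then $\dom f^*=\Cart_{i=1}^m\dom f_i^*=\XX^*$, so $f$ is cofinite; if every $\varphi_i$ is cofinite then $\dom\varphi^*=\XX^*$ and hence $-L^*(\ran\nabla\psi)\subset\dom\varphi^*$ holds trivially. In either case one of the two alternatives of Theorem~\ref{IIIt:2} is met.

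With these verifications, Theorem~\ref{IIIt:2} applies, and the conclusions are read off coordinatewise, weak convergence on a finite product being coordinatewise. For \ref{IIIpp:2i}, the identity $\overline{\dom}f=\Cart_{i=1}^m\overline{\dom}f_i$ turns the singleton hypothesis into that of Theorem~\ref{IIIt:2}\ref{IIIt:2ia}, giving $x_n\weakly\overline{x}$, i.e.\ $x_{i,n}\weakly\overline{x}_i$ for each $i$. For \ref{IIIpp:2ii}, weak sequential continuity of $\nabla f$ and $\nabla\psi$ follows coordinatewise from that of the $\nabla f_i$ and $\nabla\psi_k$; in case \ref{IIIpp:2iia} the inclusions $\dom\varphi_i\subset\IDD_i$ give $\BFS\subset\dom\varphi\subset\IDD$, so Theorem~\ref{IIIt:2}\ref{IIIt:2ib} applies, whereas in case \ref{IIIpp:2iib} openness of each $\dom f_i^*$ and weak sequential continuity of each $\nabla f_i^*$ transfer to $f^*$ (a finite product), so Theorem~\ref{IIIt:2}\ref{IIIt:2ic} applies. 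Either way $x_{i,n}\weakly\overline{x}_i$ for every $i\in\{1,\ldots,m\}$.
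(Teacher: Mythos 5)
Your proposal is correct and takes essentially the same route as the paper's own proof: reformulate on the product spaces, verify the hypotheses of Theorem~\ref{IIIt:2} (Legendre property of $f$, the cofiniteness alternative, coercivity of $D^f(x,\cdot)$, and the key estimate $f\succcurlyeq\beta\,\psi\circ L$ with $\beta=1/\sum_{k=1}^p\sigma_k\beta_k^{-1}$), identify \eqref{IIIe:63} with \eqref{IIIal:2} via Proposition~\ref{IIIl:fproxprod}, and read the conclusions off coordinatewise from Theorem~\ref{IIIt:2}\ref{IIIt:2ia}--\ref{IIIt:2ic}. The only differences are cosmetic (you derive the Bregman estimate per index $k$ before summing and phrase coercivity sequentially rather than through sublevel sets), and the domain bookkeeping you flag as delicate is handled in the paper at exactly the same level of detail, namely by checking the inequality for points of $\IDD$.
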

\begin{proof}
Denote by $\XX$ and $\YY$ the standard vector product spaces 
$\Cart_{\!\!i=1}^{\!\!m}\XX_i$ and 
$\Cart_{\!\!k=1}^{\!\!p}\YY_k$ equipped with the norms 
$x=(x_i)_{1\leq i\leq m}\mapsto\sqrt{\sum_{i=1}^m\|x_i\|^2}$ and 
$y=(y_k)_{1\leq k\leq p}\mapsto\sqrt{\sum_{k=1}^p\|y_k\|^2}$,
respectively. Then $\XX^*$ is the vector product space 
$\Cart_{\!\!i=1}^{\!\!m}\XX_i^*$ equipped with the norm 
$x^*\mapsto\sqrt{\sum_{i=1}^m\|x_i^*\|^2}$ 
and $\YY^*$ is the vector product space 
$\Cart_{\!\!k=1}^{\!\!p}\YY_k^*$ equipped with the norm 
$y^*\mapsto\sqrt{\sum_{k=1}^p\|y_k^*\|^2}$. 
Let us introduce the functions and operator
\begin{equation}
\label{IIIe:48}
\begin{cases}
\varphi\colon\XX\to\RX\colon 
x\mapsto\sum_{i=1}^m\varphi_i(x_i)\\ 
f\colon\XX\to\RX\colon 
x\mapsto\sum_{i=1}^mf_i(x_i)\\
\psi\colon\YY\mapsto\RX\colon
y\mapsto\sum_{k=1}^p\psi_k(y_k)\\ 
L\colon\XX\to\YY\colon
x\mapsto
\big(\sum_{i=1}^mL_{ik}x_i\big)_{1\leq k\leq p}.
\end{cases}
\end{equation}
Then $\psi$ is G\^ateaux differentiable 
on $\IDDPS=\Cart_{\!\!k=1}^{\!\!p}\IDDPS_k$ and 
Problem~\ref{IIIpb:2} is a special case of Problem~\ref{IIIpb:1}. 
Since \eqref{IIIe:48} yields 
$\dom f^*=\Cart_{\!\!i=1}^{\!\!m}\dom f_i^*$ 
and $\dom\varphi^*=\Cart_{\!\!i=1}^{\!\!m}\dom\varphi_i^*$, 
we deduce from our assumptions that 
either $f$ is cofinite or $\varphi$ 
is cofinite. As in \eqref{IIIe:leg} 
and \eqref{IIIe:inter}, $f$ is a Legendre function 
and $\dom\varphi\cap\IDD\neq\emp$. 
In addition,
\begin{equation}
L\big(\IDD\big)=\underset{k=1}{\overset{p}{\Cart}}
\sum_{i=1}^mL_{ki}(\IDD_i)\subset
\underset{k=1}{\overset{p}{\Cart}}\IDDPS_k=\IDDPS.
\end{equation}
Now let $x\in\IDD$. First, to show that $D^f(x,\cdot)$ 
is coercive, we fix $\rho\in\RR$. On the one hand,
\begin{equation}
\label{IIIe:55}
\menge{z=(z_i)_{1\leq i\leq m}\in\XX}
{D^f(x,z)\leq\rho}
\subset\underset{i=1}{\overset{m}{\Cart}}\menge{z_i\in\XX_i}
{D^{f_i}(x_i,z_i)\leq\rho}.
\end{equation}
On the other hand, for every $i\in\{1,\ldots,m\}$, 
since $D^{f_i}(x_i,\cdot)$ is coercive, we deduce that 
\begin{equation}
\menge{z_i\in\XX_i}{D^{f_i}(x_i,z_i)\leq\rho}\quad
\text{is bounded.}
\end{equation}
Hence \eqref{IIIe:55} implies that 
$\menge{z\in\XX}{D^f(x,z)\leq\rho}$ is bounded 
and $D^f(x,\cdot)$ is therefore coercive. 
Next, set $\beta=1/\sum_{k=1}^p\sigma_k\beta_k^{-1}$. 
We shall show that $f\succcurlyeq\beta\psi\circ L$.
To this end, fix $z=(z_i)_{1\leq i\leq m}\in\IDD$. We have
\begin{align}
D^{\psi}(Lx,Lz)
&=\sum_{k=1}^pD^{\psi_k}
\Bigg(\sum_{i=1}^mL_{ik}x_i,\sum_{i=1}^mL_{ik}z_i\Bigg)
\nonumber\\
&\leq\sum_{k=1}^p\sum_{i=1}^m\sigma_k
D^{\psi_k}(L_{ik}x_i,L_{ik}z_i)\nonumber\\
&\leq\sum_{k=1}^p\sum_{i=1}^m\sigma_k\beta_{ik}^{-1}
D^{f_i}(x_i,z_i)\nonumber\\
&\leq\sum_{k=1}^p\sigma_k\beta_k^{-1}D^f(x,z).
\end{align}
Now let us set $(\forall n\in\NN)$ 
$x_n=(x_{i,n})_{1\leq i\leq m}$. 
By virtue of Proposition~\ref{IIIl:fproxprod}, 
\eqref{IIIe:63} is a particular case of \eqref{IIIal:2}. 

\ref{IIIpp:2i}:
Since $\BFS\cap\overline{\dom}f$ 
is a singleton, the claim follows from 
Theorem~\ref{IIIt:2}\ref{IIIt:2ia}.

\ref{IIIpp:2ii}:
Our assumptions on $(f_i)_{1\leq i\leq m}$ 
and $(\psi_k)_{1\leq k\leq p}$ 
imply that $\nabla f$ and $\nabla\psi$ 
are weakly sequentially continuous. 

\ref{IIIpp:2iia}:
Since $\BFS\subset\Cart_{\!\!i=1}^{\!\!m}\dom\varphi_i
\subset\Cart_{\!\!i=1}^{\!\!m}\IDD_i
=\IDD$, the claim follows from 
Theorem~\ref{IIIt:2}\ref{IIIt:2ib}.

\ref{IIIpp:2iib}:
Since, for every $i\in\{1,\ldots,m\}$, 
$\dom f_i^*$ is open and $\nabla f_i^*$ is weakly 
sequentially continuous, we deduce that 
$\dom f^*$ is open and $\nabla f^*$ 
is weakly sequentially continuous. The assertion 
therefore follows from Theorem~\ref{IIIt:2}\ref{IIIt:2ic}.
\end{proof}

\begin{example}
In Problem~\ref{IIIpb:2}, suppose that $m=1$, that $\XX_1$ and 
$(\YY_k)_{1\leq k\leq p}$ are Hilbert spaces, and 
that, for every $k\in\{1,\ldots,p\}$, 
$\varphi_k=\omega_k\|\cdot-r_k\|^2/2$, 
where $(\omega_k)_{1\leq k\leq p}\in\RPP^p$ 
and let $(r_k)_{1\leq k\leq p}\in\Cart_{\!\!k=1}^{\!\!p}\YY_k$. 
Then the weak convergence result in 
\cite[Proposition~6.3]{IIICV13b} without errors 
is a particular instance of Proposition~\ref{IIIpp:2} with 
$f_1=\|\cdot\|^2/2$.
\end{example}

\begin{example}
\label{IIIex:3}
Let $m$ and $p$ be strictly positive integers. For every 
$i\in\{1,\ldots,m\}$ and every $k\in\{1,\ldots,p\}$, let 
$\omega_{ik}\in\RPP$, let $\varrho_k\in\RPP$, and let 
$\varphi_i\in\Gamma_0(\mathbb{R})$ be cofinite. The problem is to
\begin{equation}
\label{IIIe:62}
\minimize{(\xi_1,\ldots,\xi_m)\in\RPP^m}{\sum_{i=1}^m
\varphi_i(\xi_i)+\sum_{k=1}^p
\left(-\ln\dfrac{\sum_{i=1}^m\omega_{ik}\xi_i}{\varrho_k}
+\dfrac{\sum_{i=1}^m\omega_{ik}\xi_i}{\varrho_k}-1\right)}.
\end{equation}
Denote by $\BFS$ the set of solutions to \eqref{IIIe:62} 
and suppose that $\BFS\cap\RPP^m\neq\emp$. 
Let
\begin{equation}
\vartheta\colon\RR\to\RX\colon\xi\mapsto
\begin{cases}
-\ln\xi,&\text{if}\quad\xi>0;\\
+\infty,&\text{otherwise}
\end{cases}
\end{equation}
be Burg entropy, let $\varepsilon\in\left]0,1/(1+p)\right[$, 
let $(\eta_n)_{n\in\NN}\in\ell_{+}^1(\mathbb{N})$, and let 
$(\gamma_n)_{n\in\NN}$ be a sequence in $\mathbb{R}$ such that
\begin{equation}
(\forall n\in\NN)\quad \varepsilon\leq\gamma_n\leq
p^{-1}(1-\varepsilon)\quad\text{and}\quad
(1+\eta_n)\gamma_n-\gamma_{n+1}\leq p^{-1}\eta_n.
\end{equation}
Let $(\xi_{i,0})_{1\leq i\leq m}\in\RPP^m$ and iterate
\begin{equation}
\label{IIIe:63b}
\begin{array}{l}
\text{for}\;n=0,1,\ldots\\
\left\lfloor
\begin{array}{l}
\text{for}\;i=1,\ldots, m\\
\left\lfloor
\begin{array}{l}
\xi_{i,n+1}=\Prox_{\gamma_n\varphi_i}^{\vartheta}
\Bigg(\dfrac{-1}{\xi_{i,n}}
-\gamma_n\sum\limits_{k=1}^p\omega_{ik}
\Bigg(\dfrac{-1}{\sum_{j=1}^m\omega_{jk}\xi_{j,n}}+
\dfrac{1}{\varrho_k}\Bigg)\Bigg).
\end{array}
\right.\\
\end{array}
\right.\\
\end{array}
\end{equation}
Then there exists $(\overline{\xi}_i)_{1\leq i\leq m}\in\BFS$ 
such that $(\forall i\in\{1,\ldots,m\})$ 
$\xi_{i,n}\to\overline{\xi}_i$. 
\end{example}
\begin{proof}
For every $i\in\{1,\ldots,m\}$ and every $k\in\{1,\ldots,p\}$, 
let us set $\XX_i=\RR$, $\YY_k=\RR$, 
$\psi_k=D^{\vartheta}(\cdot,\varrho_k)$, 
and $L_{ik}\colon\xi_i\mapsto\omega_{ik}\xi_i$. 
Then \eqref{IIIe:62} is a particular case of 
\eqref{IIIe:44}. Since $\psi$ is not differentiable 
on $\RR^p$, the standard forward-backward algorithm 
is inapplicable. We show that the problem can be solved 
by using Proposition~\ref{IIIpp:2}. 
First, let $(\xi_i)_{1\leq i\leq m}$ 
and $(\eta_i)_{1\leq i\leq m}$ be in $\RPP^m$, and consider
\begin{equation}
\label{IIIe:65}
\phi\colon\mathbb{R}\to\RX\colon\xi\mapsto
\begin{cases}
-\ln\xi+\xi-1, &\text{if}\quad\xi\in\RPP;\\
+\infty,&\text{otherwise}.
\end{cases}
\end{equation}
We see that $\phi$ is convex and positive. Thus, 
\begin{equation}
\phi\Bigg(\dfrac{\sum_{i=1}^m\xi_i}{\sum_{i=1}^m\eta_i}\Bigg)
=\phi\Bigg(\sum_{i=1}^m\dfrac{\eta_i}{\sum_{j=1}^m\eta_j}
\dfrac{\xi_i}{\eta_i}\Bigg)
\leq\sum_{i=1}^m\dfrac{\eta_i}{\sum_{j=1}^m\eta_j}
\phi\Bigg(\dfrac{\xi_i}{\eta_i}\Bigg)
\leq\sum_{i=1}^m\phi\Bigg(\dfrac{\xi_i}{\eta_i}\Bigg),
\end{equation}
and hence,
\begin{equation}
-\ln\dfrac{\sum_{i=1}^m\xi_i}{\sum_{i=1}^m\eta_i}
+\dfrac{\sum_{i=1}^m\xi_i}{\sum_{i=1}^m\eta_i}-1 
\leq\sum_{i=1}^m\Bigg(-\ln\dfrac{\xi_i}{\eta_i}
+\dfrac{\xi_i}{\eta_i}-1\Bigg).
\end{equation}
In turn,
\begin{equation}
\label{IIIe:66+}
D^{\vartheta}\Bigg(\sum_{i=1}^m\xi_i,\sum_{i=1}^m\eta_i\Bigg)
\leq\sum_{i=1}^mD^{\vartheta}(\xi_i,\eta_i).
\end{equation}
This shows that \eqref{IIIe:45} is satisfied with 
$(\forall k\in\{1,\ldots,p\})$ $\sigma_k=1$. 
Next, let us set $(\forall i\in\{1,\ldots,m\})$ 
$f_i=\vartheta$. Fix $i\in\{1,\ldots,m\}$ 
and $k\in\{1,\ldots,p\}$, and let 
$\xi_i$ and $\eta_i$ be in $\RPP$. Then
\begin{equation}
D^{\psi_k}(L_{ik}\xi_i,L_{ik}\eta_i)
=D^{\vartheta}(\omega_{ik}\xi_i,\omega_{ik}\eta_i)
=D^{\vartheta}(\xi_i,\eta_i)=D^{f_i}(\xi_i,\eta_i),
\end{equation}
which implies that $f_i\succcurlyeq\psi_k\circ L_{ik}$. 
In addition, since $\dom f_i^*=\left]-\infty,0\right[$ 
is open, \cite[Lemma~7.3(ix)]{IIIBBC01} asserts that 
$D^{f_i}(\xi_i,\cdot)$ is coercive. 
We therefore deduce the convergence result from 
Proposition~\ref{IIIpp:2}\ref{IIIpp:2iib}.
\end{proof}

\begin{example}
\label{IIIex:4}
Let $m$ and $p$ be strictly positive integers. For every 
$i\in\{1,\ldots,m\}$ and every $k\in\{1,\ldots,p\}$, let 
$\omega_{ik}\in\RPP$, let $\varrho_k\in\RPP$, and let 
$\varphi_i\in\Gamma_0(\mathbb{R})$. The problem is to
\begin{equation}
\label{IIIe:67}
\minimize{(\xi_1,\ldots,\xi_m)\in\left[0,+\infty\right[^m}
{\sum_{i=1}^m\varphi_i(\xi_i)+ \sum_{k=1}^p
\left(\left(\sum_{i=1}^m\omega_{ik}\xi_i\right)
\ln\dfrac{\sum_{i=1}^m\omega_{ik}\xi_i}{\varrho_k}
-\sum_{i=1}^m\omega_{ik}\xi_i+\varrho_k\right)}.
\end{equation}
Denote by $\BFS$ the set of solutions to \eqref{IIIe:67} 
and suppose that $\BFS\cap\RPP^m\neq\emp$.
Let
\begin{equation}
\vartheta\colon\RR\to\RX\colon\xi\mapsto
\begin{cases}
\xi\ln\xi-\xi,&\text{if}\;\xi\in\RPP;\\
0,&\text{if}\;\xi=0;\\
+\infty,&\text{otherwise}
\end{cases}
\end{equation}
be Boltzmann-Shannon entropy, let 
$\beta=\max\limits_{1\leq k\leq p}
\max\limits_{1\leq i\leq m}\omega_{ik}$, 
let $\varepsilon\in\left]0,1/(1+\beta)\right[$, 
let $(\eta_n)_{n\in\NN}\in\ell_{+}^1(\mathbb{N})$, 
and let $(\gamma_n)_{n\in\NN}$ 
be a sequence in $\mathbb{R}$ such that
\begin{equation}
(\forall n\in\NN)\quad \varepsilon\leq\gamma_n
\leq(p\beta)^{-1}(1-\varepsilon)\quad\text{and}\quad
(1+\eta_n)\gamma_n-\gamma_{n+1}\leq(p\beta)^{-1}\eta_n.
\end{equation}
Let $(\xi_{i,0})_{1\leq i\leq m}\in\RPP^m$ and iterate
\begin{equation}
\label{IIIe:68}
\begin{array}{l}
\text{for}\;n=0,1,\ldots\\
\left\lfloor
\begin{array}{l}
\text{for}\;i=1,\ldots, m\\
\left\lfloor
\begin{array}{l}
\xi_{i,n+1}=\Prox^{\vartheta}_{\gamma_n\varphi_i}\Big(\ln\xi_{i,n}
-\gamma_n\sum\limits_{k=1}^p\omega_{ik}
\Big(\ln\Big(\sum_{j=1}^m\omega_{jk}\xi_{j,n}\Big)
-\ln\varrho_k\Big)\Big).
\end{array}
\right.\\
\end{array}
\right.\\
\end{array}
\end{equation}
Then there exists $(\overline{\xi}_i)_{1\leq i\leq m}\in\BFS$ 
such that $(\forall i\in\{1,\ldots,m\})$ 
$\xi_{i,n}\to\overline{\xi}_i$.
\end{example}
\begin{proof}
For every $i\in\{1,\ldots,m\}$ and every $k\in\{1,\ldots,p\}$, 
let us set $\XX_i=\RR$, $\YY_k=\RR$, 
$\psi_k=D^{\vartheta}(\cdot,\varrho_k)$, 
and $L_{ik}\colon\xi_i\mapsto\omega_{ik}\xi_i$. 
Then \eqref{IIIe:67} is a particular case of 
\eqref{IIIe:44}. We cannot apply the standard forward-backward 
algorithm here since $\psi$ is not differentiable 
on $\RR^p$. We shall verify the assumptions of Proposition~\ref{IIIpp:2}. 
First, let $(\xi_i)_{1\leq i\leq m}$ 
and $(\eta_i)_{1\leq i\leq m}$ be in $\RPP^m$. 
Since
\begin{equation}
\label{IIIe:65+}
\phi\colon\mathbb{R}\to\RX\colon\xi\mapsto
\begin{cases}
\xi\ln\xi, &\text{if}\quad\xi\in\RPP;\\
0,&\text{if}\quad\xi=0;\\
+\infty,&\text{otherwise}
\end{cases}
\end{equation}
is convex, we have
\begin{equation}
\phi\Bigg(\dfrac{\sum_{i=1}^m\xi_i}{\sum_{i=1}^m\eta_i}\Bigg)
=\phi\Bigg(\sum_{i=1}^m\dfrac{\eta_i}{\sum_{j=1}^m\eta_j}
\dfrac{\xi_i}{\eta_i}\Bigg)
\leq\sum_{i=1}^m\dfrac{\eta_i}{\sum_{j=1}^m\eta_j}
\phi\Bigg(\dfrac{\xi_i}{\eta_i}\Bigg),
\end{equation}
and hence,
\begin{equation}
\dfrac{\sum_{i=1}^m\xi_i}{\sum_{i=1}^m\eta_i}
\ln\dfrac{\sum_{i=1}^m\xi_i}{\sum_{i=1}^m\eta_i}
\leq\sum_{i=1}^m\dfrac{\eta_i}{\sum_{j=1}^m\eta_j}
\dfrac{\xi_i}{\eta_i}\ln\dfrac{\xi_i}{\eta_i}
=\dfrac{\sum_{i=1}^m\xi_i\ln\dfrac{\xi_i}{\eta_i}}
{\sum_{i=1}^m\eta_i}.
\end{equation}
In turn,
\begin{equation}
\Bigg(\sum_{i=1}^m\xi_i\Bigg)
\ln\dfrac{\sum_{i=1}^m\xi_i}{\sum_{i=1}^m\eta_i}
\leq\sum_{i=1}^m\xi_i\ln\dfrac{\xi_i}{\eta_i},
\end{equation}
which implies that
\begin{align}
\label{IIIe:66}
D^{\vartheta}
\Bigg(\sum_{i=1}^m\xi_i,\sum_{i=1}^m\eta_i\Bigg)
&=\Bigg(\sum_{i=1}^m\xi_i\Bigg)
\ln\dfrac{\sum_{i=1}^m\xi_i}{\sum_{i=1}^m\eta_i}
-\sum_{i=1}^m\xi_i+\sum_{i=1}^m\eta_i\nonumber\\
&\leq\sum_{i=1}^m\Bigg(\xi_i\ln\dfrac{\xi_i}{\eta_i}
-\xi_i+\eta_i\Bigg)\nonumber\\
&=\sum_{i=1}^mD^{\vartheta}(\xi_i,\eta_i).
\end{align}
This shows that \eqref{IIIe:45} is satisfied with 
$(\forall k\in\{1,\ldots,p\})$ $\sigma_k=1$. 
Next, let us set $(\forall i\in\{1,\ldots,m\})$ 
$f_i=\vartheta$. Fix $i\in\{1,\ldots,m\}$ 
and $k\in\{1,\ldots,p\}$, and let 
$\xi_i$ and $\eta_i$ be in $\RPP$. Then
\begin{equation}
D^{\psi_k}(L_{ik}\xi_i,L_{ik}\eta_i)
=D^{\vartheta}(\omega_{ik}\xi_i,\omega_{ik}\eta_i)
=\omega_{ik}D^{\vartheta}(\xi_i,\eta_i)
\leq\beta D^{\vartheta}(\xi_i,\eta_i),
\end{equation}
which implies that 
$f_i\succcurlyeq\beta^{-1}\psi_k\circ L_{ik}$. 
In addition, since $f_i$ is supercoercive, 
$f_i$ is cofinite and 
\cite[Lemma~7.3(viii)]{IIIBBC01} asserts that 
$D^{f_i}(\xi_i,\cdot)$ is coercive. 
Therefore, the claim follows from 
Proposition~\ref{IIIpp:2}\ref{IIIpp:2iib}.
\end{proof}

\begin{remark}
The Bregman distance associated with Burg entropy, 
i.e., the Itakura-Saito divergence, is used in linear regression 
\cite[Section~3]{IIIBa13}. 
The Bregman distance associated with Boltzmann-Shannon entropy, 
i.e., the Kullback-Leibler divergence, is used in information 
theory \cite[Section~3]{IIIBa13} and image processing 
\cite{IIIByr93}.
\end{remark}

\vskip 0.5cm
\noindent{\bf Acknowledgment.} 
I would like to thank my doctoral advisor Professor Patrick L. 
Combettes for bringing this problem to my attention and for 
helpful discussions.

\end{document}